\definecolor{blue(munsell)}{rgb}{0.0, 0.5, 0.69}
\def\l@subsection{\@tocline{2}{0pt}{2pc}{5pc}{}}
\numberwithin{equation}{section}
\DeclareMathOperator{\Hom}{Hom}
\DeclareMathOperator{\Fun}{Fun}
\DeclareMathOperator{\Mod}{Mod}
\DeclareMathOperator{\Fundg}{Fun_{dg}}
\DeclareMathOperator{\dual}{D}
\DeclareMathOperator{\Tw}{Tw}
\DeclareMathOperator{\Inj}{Inj}
\DeclareMathOperator{\DGInj}{DGInj}
\DeclareMathOperator{\dgm}{dgm}
\DeclareMathOperator{\compdg}{C_{dg}}
\DeclareMathOperator{\compdgplus}{C^+_{dg}}
\DeclareMathOperator{\compdgminus}{C^{--}_{dg}}
\DeclareMathOperator{\hocomp}{\mathsf{K}}
\DeclareMathOperator{\dercomp}{\mathsf{D}}
\DeclareMathOperator{\dercompdg}{\mathsf{D}_{\mathrm{dg}}}
\DeclareMathOperator{\dercompdgplus}{\mathsf{D}^+_{\mathrm{dg}}}
\DeclareMathOperator{\lotimes}{\overset{\mathbb L}{\otimes}}
\newcommand{\cat}{\mathscr}
\newcommand{\varcat}{\mathbf}
\newcommand{\opp}[1]{{#1}^{\mathrm{op}}}
\newcommand{\kat}{\mathsf}
\newcommand{\Hqe}{\kat{Hqe}}
\newcommand{\basering}[1]{\mathbf{#1}}
\newtheorem{theorem}{Theorem}[section]
\newtheorem*{theorem*}{Theorem}
\newtheorem{proposition}[theorem]{Proposition}
\newtheorem{corollary}[theorem]{Corollary}
\newtheorem{lemma}[theorem]{Lemma}
\theoremstyle{remark}
\newtheorem{remark}[theorem]{Remark}
\theoremstyle{definition}
\newtheorem{definition}[theorem]{Definition}
\title{Uniqueness of dg-lifts via restriction to injective objects}
\author{Francesco Genovese} 
\address[Francesco Genovese]{Univerzita Karlova, Matematicko-fyzik\'{a}ln\'{i} fakulta, Katedra Algebry, Sokolovsk\'{a} 49/83, 186 75 Praha 8, \v{C}esk\'a republika.}
\email{genovese@karlin.mff.cuni.cz}
\thanks{The author acknowledges the support of the Czech Science Foundation grant [GA \v{C}R 20-13778S]}
\begin{document}

\begin{abstract}
We prove a uniqueness result of dg-lifts for the derived pushforward and pullback functors of a flat morphism between separated Noetherian schemes, between the unbounded or bounded below derived categories of quasi-coherent sheaves. The technique is purely algebraic-categorical and involves reconstructing dg-lifts uniquely from their restrictions to the subcategories of injective objects.
\end{abstract}

\maketitle

\section*{Introduction}
Triangulated categories, and in particular derived categories, are now a classical tool in homological algebra, with many relevant applications to algebraic geometry - typically, with derived categories of sheaves on a given scheme.

It is well-known that, from a theoretical point of view, triangulated categories are far from being well-behaved: there is no sensible way to define a ``triangulated category of triangulated functors between triangulated categories'' or a tensor product \cite[\S 3]{bondal-larsen-lunts-grothendieck}. Problems arise essentially from the failure of functoriality of mapping cones. 

The solution to this issue is to consider \emph{enhancements} of triangulated categories: namely, viewing them as shadows of more complicated structures. There are many possible choices of enhancements, among which \emph{differential graded (dg) categories} are one of the most popular.

A dg-category is a category enriched over chain complexes over some base commutative ring or field. Chain complexes have a homotopy theory, and this yields a homotopy theory of dg-categories themselves \cite{tabuada-quillendg} \cite{toen-dgcat-invmath}. A very basic feature of this is that, given a dg-category $\cat A$, we may define its \emph{homotopy category} $H^0(\cat A)$ by taking the same objects of $\cat A$ and the zeroth cohomology of the hom complexes. Quite more complicated is to describe ``homotopically relevant'' functors between dg-categories, which we call \emph{quasi-functors}. A quasi-functor $F \colon \cat A \to \cat B$ yields a genuine functor $H^0(F) \colon H^0(\cat A) \to H^0(\cat B)$; quasi-functors can be concretely described in a variety of ways, including particular dg-bimodules \cite{canonaco-stellari-internalhoms} and $A_\infty$-functors \cite{ornaghi-Ainf}.

A dg-category is \emph{pretriangulated} \cite{bondal-kapranov-enhanced} essentially if it is closed under taking shifts and cones, which are now \emph{functorial}, in contrast to what happens in triangulated categories. If $\cat A$ is a pretriangulated dg-category, its homotopy category $H^0(\cat A)$ has a natural structure of triangulated category.

It is now very natural to ask whether a given triangulated category can be ``upgraded'' to a pretriangulated dg-category. More precisely, a \emph{dg-enhancement} of a triangulated category $\cat T$ is a pretriangulated dg-category $\cat A$ such that $H^0(\cat A)$ is equivalent to $\cat T$. It is not very hard to show that most triangulated categories arising in algebraic geometry (namely, derived categories of (quasi)-coherent sheaves or relevant subcategories thereof) have a dg-enhancement, and it is indeed not trivial to find examples of triangulated categories without a dg-enhancement -- which anyway exist even over a field \cite{rizzardo-vdb-nodgenh}. Also quite challenging is to prove whether such dg-enhancements are \emph{unique} or not (up to quasi-equivalence, i.e. ``invertible quasi-functors''). Recently, uniqueness has been proved for all sorts of derived categories of abelian categories \cite{canonaco-stellari-neeman-dgenh-all}, improving previous results \cite{canonoaco-stellari-dgenh-grothendieck} \cite{lunts-orlov-dgenh}; see also the survey \cite{canonaco-stellari-dgenh-survey}. A counterexample to uniqueness over a base field was given in \cite{rizzardo-vdb-nonunique}.

Another natural question we might now ask is whether triangulated functors between triangulated categories can also be ``upgraded'' to quasi-functors between dg-enhancements. More precisely, given pretriangulated dg-categories $\cat A$, $\cat B$ and a functor $T \colon H^0(\cat A) \to H^0(\cat B)$, a \emph{dg-lift} of $T$ is a quasi-functor $F \colon \cat A \to \cat B$ such that $H^0(F)$ is isomorphic to $T$. We say that the dg-lift $F$ is \emph{unique} if it uniquely determined up to isomorphism of quasi-functors. 

The relevance of the problem of existence and uniqueness of dg-lifts stems from its connection to the problem of \emph{existence and uniqueness of Fourier-Mukai kernels} of triangulated functors between derived categories of sheaves on schemes, as explained in \cite{lunts-schnurer-newenhancements} (see also \S \ref{subsubsection:FMkernels}). In a nutshell, finding (unique) dg-lifts of direct sum-preserving triangulated functors of the form $\dercomp(\operatorname{QCoh}(X)) \to \dercomp(\operatorname{QCoh}(Y))$, for suitable schemes $X$ and $Y$, is the same problem as finding (unique) Fourier-Mukai kernels of such functors. This key idea was essentially the starting point of \cite{genovese-uniqueness}. The survey \cite{canonaco-stellari-dgenh-survey} contains an account on further positive and negative answers to the problem. More recently, counterexamples to uniqueness have been given, even when $X$ and $Y$ are smooth projective schemes \cite{rizzardo-vdb-nonFM} \cite{rizzardo-theo-nonFMnew} \cite{kung-nonFM}.

In this paper, we give a positive uniqueness result of dg-lifts which applies to \emph{derived pushforward and pullback} functors between unbounded or bounded below derived categories of quasi-coherent sheaves:
\begin{theorem*}[Theorem \ref{theorem:dglift_geometric_uniqueness}, Theorem \ref{theorem:dglift_uniqueness_boundedbelow}, Corollary \ref{corollary:uniqueness_fouriermukai}]
    Let $X$ and $Y$ be separated Noetherian schemes and let $f \colon X \to Y$ be a flat morphism.

    Then, the derived pushforward an pullback functors
    \begin{align*}
    \mathbb R f_* \colon \dercomp(\operatorname{QCoh}(X)) \to \dercomp(\operatorname{QCoh}(Y)), & \qquad f^* \colon \dercomp(\operatorname{QCoh}(Y)) \to \dercomp(\operatorname{QCoh}(X)), \\
    \mathbb (R f_*)^+ \colon \dercomp^+(\operatorname{QCoh}(X)) \to \dercomp^+(\operatorname{QCoh}(Y)), & \qquad (f^*)^+ \colon \dercomp^+(\operatorname{QCoh}(Y)) \to \dercomp^+(\operatorname{QCoh}(X)),
    \end{align*}
    have unique dg-lifts.

    In particular, if $X$ and $Y$ are quasi-projective, the functors
    \[
    \mathbb R f_* \colon \dercomp(\operatorname{QCoh}(X)) \to \dercomp(\operatorname{QCoh}(Y)), \qquad f^* \colon \dercomp(\operatorname{QCoh}(Y)) \to \dercomp(\operatorname{QCoh}(X))
    \]
    admit unique Fourier-Mukai kernels.
\end{theorem*}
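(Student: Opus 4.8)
The plan is to work with the dg-enhancements of $\dercomp(\operatorname{QCoh}(X))$ and $\dercomp^+(\operatorname{QCoh}(X))$ furnished by the dg-categories $\dercompdg(\operatorname{QCoh}(X))$ and $\dercompdgplus(\operatorname{QCoh}(X))$ of (bounded-below) h-injective complexes, inside which the injective objects of $\operatorname{QCoh}(X)$, placed in degree $0$, span a full dg-subcategory $\cat I_X$; similarly for $Y$. Existence of dg-lifts is built into these models, so the whole content is \emph{uniqueness}. The starting observation is that, since $f$ is flat, the pullback $f^*$ is exact and its right adjoint $f_*$ therefore preserves injectives; consequently $\mathbb R f_*$ restricts on $\cat I_X$ to the dg-functor induced by the underived $f_*$ and lands in $\cat I_Y$, while the exact $f^*$ restricts on $\cat I_Y$ to a dg-functor valued in $\dercompdg(\operatorname{QCoh}(X))$. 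Each of the four functors thus comes with a canonical dg-lift after restriction to injectives, and the theorem asserts that this restriction determines the lift.

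The first step is a rigidity property of the injective subcategories. Since $\cat I_X$ is an ordinary category viewed in degree $0$, its hom-complexes are cohomologically concentrated in degree $0$, and the same holds for the morphism complexes in the target between the images of its objects, these being objects of the abelian heart, between which negative $\Ext$-groups vanish. An $A_\infty$-component $f_n$ of a quasi-functor out of $\cat I_X$ has degree $1-n$, so for $n \geq 2$ it takes degree-$0$ inputs into a negatively graded and cohomologically trivial part of the target; hence every such quasi-functor is formal and is determined up to isomorphism of quasi-functors by the induced functor on $H^0$. Applying this to a dg-lift of $\mathbb R f_*$ (source $\cat I_X$) and of $f^*$ (source $\cat I_Y$): its $H^0$ on injectives is forced to be $f_*$, respectively $f^*$, so its restriction to the injective subcategory coincides, up to isomorphism, with the canonical one above. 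Any two dg-lifts therefore agree on injectives.

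The decisive step is to reconstruct the dg-lift on the whole enhancement from its restriction to injectives, i.e.\ to show that restriction along $\cat I_X \hookrightarrow \dercompdg(\operatorname{QCoh}(X))$ is injective on isomorphism classes of (coproduct-preserving) quasi-functors. The enhancement is generated from $\cat I_X$ by the pretriangulated structure together with homotopy (co)limits: in the bounded-below case every object is the homotopy colimit of its stupid truncations, each a finite complex of injectives lying in the pretriangulated hull of $\cat I_X$; in the unbounded case one realises an h-injective complex as the homotopy limit of its bounded-below truncations. A dg-lift preserves shifts, cones and --- being coproduct-preserving, and by flatness and left-exactness of the underlying functors --- the relevant homotopy (co)limits, so it must coincide with the corresponding Kan extension of its restriction to $\cat I_X$ ($\Lan$, via homotopy colimits, in the bounded-below case; $\Ran$, via homotopy limits, in the unbounded case); two dg-lifts with isomorphic restrictions are then isomorphic. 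I expect this reconstruction to be the main obstacle: the bounded-below statement rests only on finite cones and sequential homotopy colimits and should be transparent, whereas the unbounded statement requires genuine control of homotopy limits and a careful verification that a dg-lift actually realises the Kan extension, the pushforward and pullback cases having to be handled through their respective preservation properties.

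For the Fourier--Mukai corollary I would specialise to quasi-projective $X$ and $Y$ and invoke the correspondence of Lunts--Schn\"urer between dg-lifts of coproduct-preserving triangulated functors $\dercomp(\operatorname{QCoh}(X)) \to \dercomp(\operatorname{QCoh}(Y))$ and Fourier--Mukai kernels. The functor $\mathbb R f_*$ preserves coproducts because $X$ is Noetherian, hence quasi-compact and quasi-separated, and $f^*$ preserves them as a left adjoint; both are therefore of the required kind, and the uniqueness of the dg-lift established above translates directly into uniqueness of the Fourier--Mukai kernel.
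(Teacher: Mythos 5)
Your overall strategy---rigidity of the dg-lift on the injective subcategory, then reconstruction of the lift from its restriction to injectives via truncations---is exactly the paper's, and your formality argument on injectives and the Lunts--Schn\"urer reduction for the Fourier--Mukai corollary match Proposition \ref{proposition:dglift_bounded} and Corollary \ref{corollary:uniqueness_fouriermukai}. But the step you yourself flag as ``the main obstacle'' is precisely where the paper's actual content lies, and your sketch of it has two concrete defects. First, the stupid-truncation inclusions $X_{\leq n} \to X_{\leq n+1}$ are \emph{not} closed degree $0$ morphisms (they fail to commute with the differential $X^n \to X^{n+1}$; cf.\ \eqref{equation:projections_inclusions_systems}), so ``every bounded-below object is the homotopy colimit of its stupid truncations'' is not available as stated; the usable strict systems are the closed projections $p_{n+1,n} \colon X_{\leq n+1} \to X_{\leq n}$ and the closed inclusions $X_{\geq -n} \to X_{\geq -n-1}$. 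Second, and more fundamentally, comparing $F(X)$ and $G(X)$ objectwise via preservation of homotopy (co)limits does not yield what the theorem demands, namely an isomorphism of \emph{quasi-functors}, i.e.\ a zig-zag of quasi-isomorphisms of dg-bimodules $F \leftarrow H \to G$: homotopy (co)limits in $H^0$ are too weakly functorial to assemble objectwise identifications coherently, and this is why ``a dg-lift realises the Kan extension'' is not a routine verification. The paper resolves both points at once in \S\ref{section:extensionresult}: it dualizes (opposite quasi-functors, $\varcat P = \opp{\varcat I}$) so that coproduct-preservation becomes product-preservation, extends the restricted bimodule by the \emph{strict} dg-functorial limit $\varprojlim_{n\geq 0} F_0(-_{\leq n})$ along the projection system (Proposition \ref{proposition:dgfunct_extend_limit}), shows this strict limit is a homotopy limit because the transition maps are split epimorphisms (Proposition \ref{proposition:bimod_preserve_limit_qis}), and extends the zig-zag itself (Lemma \ref{lemma:extending_qis_unbounded}). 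Likewise, the bounded-below case is not ``transparent'': agreement on $\varcat I$ implies agreement on all of $\compdgplus(\varcat I)$ only through the correspondence theorem \cite[Theorem 1.4]{genovese-lowen-vdb-dginj} invoked in Proposition \ref{proposition:dglift_bounded}, whose proof is the twisted-complex machinery, not finite cones plus telescopes.

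A separate gap is your direct treatment of $f^*$ by restriction to $\varcat I_Y$: since $f^*$ does not send injectives to injectives, the hom-complexes between $f^*I$ and $f^*J$ are only cohomologically concentrated in degrees $\geq 0$ (positive $\Ext$-groups survive), so your degree-count formality argument does not apply verbatim; moreover $f^*$, being a left adjoint, does not preserve the homotopy limits your unbounded reconstruction would need, and switching to colimit presentations for $f^*$ would require a separate bounded-below argument not covered by the correspondence theorem (which assumes a t-exact \emph{left} adjoint, which $f^*$ lacks). The paper sidesteps all of this: uniqueness for $f^*$ (and $f^\times$) is deduced formally from uniqueness for $\mathbb R f_*$ via uniqueness of adjoint quasi-functors (Lemma \ref{lemma:dglift_uniqueness_adjoints}). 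You should reroute your argument for the pullback through that adjunction transfer rather than attempting a parallel reconstruction.
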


The proof of the above result, given in \S \ref{section:mainresults}, uses completely algebraic-categorical techniques, with the following key idea. The assumption that $f \colon X \to Y$ is flat ensures that the pullback functor $f^* \colon \operatorname{QCoh}(Y) \to \operatorname{QCoh}(X)$ is exact, so that the induced triangulated functor $f^* \colon \dercomp(\operatorname{QCoh}(Y)) \to \dercomp(\operatorname{QCoh}(X))$ is \emph{t-exact} with respect to the natural t-structures on the given derived categories. Then, we may essentially prove that any dg-lift of $\mathbb R f_*$ is uniquely determined by its restriction to the subcategory of injective objects of $\operatorname{QCoh}(X)$, and this restriction actually maps such subcategory of injectives to the subcategory of injective objects of $\operatorname{QCoh}(Y)$. Hence, any dg-lift of $\mathbb R f_*$ is uniquely determined by an ordinary functor, whence the uniqueness. Uniqueness of dg-lifts of the pullback $f^*$ easily follows from the fact that $f^*$ is left adjoint to $\mathbb R f_*$.

An important tool in the proof is the \emph{homotopy category of injectives} $\hocomp(\Inj(\operatorname{QCoh}(X)))$ and its relationship to $\dercomp(\operatorname{QCoh}(X))$ as explained in \cite{krause-stable}. Technically, the hardest part is understanding how to extend quasi-functors from (the chosen dg-enhancement of) the bounded below homotopy category $\hocomp^+(\Inj(\operatorname{QCoh}(X)))$ to $\hocomp(\Inj(\operatorname{QCoh}(X)))$. This is done using brutal truncations, which after all seem to be better behaved than expected, from a certain dg-functorial point of view -- details of this are discussed in \S \ref{section:extensionresult}.

\subsection*{Acknowledgements}
The author thanks Alberto Canonaco and Paolo Stellari for answering questions about the feasibility of the main argument of this paper, ultimately encouraging its creation.

\section{Preliminaries}
We fix once and for all a ground field $\basering k$. Every category will be $\basering k$-linear and every scheme will be over $\operatorname{Spec}(\basering k)$. We will sometimes use the expression ``$\basering k$-module'' as a synonim for ``$\basering k$-vector space''.

We will disregard set-theoretical issues by implicitly fixing Grothendieck universes when needed.
\subsection{Dg-categories and quasi-functors}
Throughout this paper, we will use triangulated categories and differential graded (dg) categories as their enhancements. We assume the reader to be acquainted with these topics. We refer to \cite{keller-dgcat} for a general survey on dg-categories; the preliminary sections of \cite{genovese-lowen-vdb-dginj} \cite{genovese-ramos-gabrielpopescu} \cite{genovese-twisted-unbounded} may also be useful. Here, we just fix the notation and list the definitions and results which we strictly need.
\begin{definition}
    A \emph{dg-category} $\cat A$ is a category enriched over the closed symmetric monoidal of chain complexes of $\basering k$-modules.

    For any dg-category $\cat A$, we may define the \emph{homotopy category} $H^0(\cat A)$.

    For dg-categories $\cat A$ and $\cat B$, we have the \emph{tensor product} $\cat A \otimes \cat B$ and the \emph{dg-category of dg-functors} $\Fundg(\cat A, \cat B)$.

    There is a dg-category $\dgm(\basering k)$ of complexes of $\basering k$-modules. If $\cat A$ is a dg-category, we set
    \[
    \dgm(\cat A) = \Fundg(\opp{\cat A}, \dgm(\basering k)),
    \]
    the dg-category of \emph{right $\cat A$-dg-modules}. Replacing $\cat A$ with $\opp{\cat A}$, we get the dg-category $\dgm(\opp{\cat A})$ of \emph{left $\cat A$-dg-modules}. We also have the dg-category $\dgm(\cat 
     B \otimes \opp{\cat A})$ of \emph{$\cat A$-$\cat B$-dg-bimodules}. Such dg-bimodules can be identified with dg-functors
     \[
        \cat A \to \dgm(\cat B).
     \]

    If $\cat A$ is a dg-category, we may define its \emph{derived dg-category} $\dercompdg(\cat A)$ by taking the full dg-subcategory of $\compdg(\cat A)$ of h-projective dg-modules. Then, we identify
    \[
    H^0(\dercompdg(\cat A)) = \dercomp(\cat A).
    \]
    where $\dercomp(\cat A)$ is the \emph{derived category} of $\cat A$, obtained by localizing $H^0(\dgm(\cat A))$ along quasi-isomorphisms.

    For any dg-category $\cat A$, we have the \emph{dg-Yoneda embedding}
    \begin{align*}
        \cat A & \hookrightarrow \dgm(\cat A), \\
        A & \mapsto \cat A(-,A).
    \end{align*}
    This induces the \emph{derived-dg Yoneda embedding}
    \[
    \cat A \hookrightarrow \dercompdg(\cat A)
    \]
    and the \emph{derived Yoneda embedding}
    \[
    H^0(\cat A) \hookrightarrow \dercomp(\cat A)
    \]
    by taking $H^0(-)$.
\end{definition}

Dg-categories can be used as enhancements of triangulated categories:
\begin{definition}
    A dg-category $\cat A$ is \emph{pretriangulated} \cite{bondal-kapranov-enhanced} \cite{bondal-larsen-lunts-grothendieck} if the dg-Yoneda embedding
    \[
    \cat A \hookrightarrow \dgm(\cat A)
    \]
    induces a quasi-equivalence with the smallest full dg-subcategory of $\dgm(\cat A)$ containing the image of $\cat A$ and closed under taking shifts and mapping cones. If $\cat A$ is pretriangulated, the homotopy category $H^0(\cat A)$ has a natural triangulated structure.

    A \emph{dg-enhancement} of a triangulated category $\cat T$ is a pretriangulated dg-category $\cat A$ such that $H^0(\cat A)$ is equivalent to $\cat T$:
    \[
    H^0(\cat A) \cong \cat T.
    \]
\end{definition}

\subsubsection{Quasi-functors} 
By localizing the category of (small) dg-categories along quasi-equivalences we obtain the \emph{homotopy category of dg-categories} $\Hqe$ \cite{tabuada-quillendg}. Since we are working on a base field, the tensor product of dg-categories need not be derived and induces a symmetric monoidal structure on $\Hqe$. 

An important theorem \cite{toen-dgcat-invmath} tells us that the symmetric monoidal category $\Hqe$ is closed, namely, it has an \emph{internal hom}. For given dg-categories $\cat A$ and $\cat B$, such internal hom will be denoted by
\begin{equation}
\mathbb R\!\Hom(\cat A, \cat B).
\end{equation}
Objects of this dg-category are called \emph{quasi-functors}. Concretely, quasi-funtors can be described as suitable dg-bimodules \cite{canonaco-stellari-internalhoms}. Thus, a quasi-functor 
\[
F \colon \cat A \to \cat B
\]
will be for us a dg-functor $\cat A \to \dgm(\cat B)$ (namely, an $\cat A$-$\cat B$-dg-bimodule) with the property that, for any object $A \in \cat A$, there is an object $\Phi_F(A) \in \cat B$ and a quasi-isomorphism
\[
\cat B(-,\Phi_F(A)) \to F(A).
\]

A quasi-functor $F \colon \cat A \to \cat B$ induces a genuine functor
\begin{equation}
\begin{split}
    H^0(F) \colon H^0(\cat A) &\to H^0(\cat B), \\
    A & \mapsto \Phi_F(A).
\end{split}
\end{equation}

If $F$ and $G$ are quasi-functors, we may define a \emph{morphism}
\begin{equation}
\varphi \colon F \to G
\end{equation}
as a morphism in $H^0(\mathbb R\! \Hom(\cat A, \cat B))$, which in particular is an isomorphism in the derived category of dg-bimodules $\dercomp(\cat B \otimes \opp{\cat A})$. Concretely, $\varphi$ can be represented by a zig-zag
\[
F \xleftarrow{\sim} H \to G,
\]
where $H \xrightarrow{\sim} F$ is a quasi-isomorphism of $\cat A$-$\cat B$-dg-bimodules. 

We say that $\varphi$ is an \emph{isomorphism} if it is an isomorphism in $H^0(\mathbb R\! \Hom(\cat A, \cat B))$. More concretely, an isomorphism can be represented by a zig-zag
\[
F \xleftarrow{\sim} H \xrightarrow{\sim} G,
\]
where both arrows are quasi-isomorphisms. We will write
\begin{equation}
F \cong G
\end{equation}
for isomorphic quasi-functors.

We can now discuss \emph{dg-lifts}. If $\cat A$ and $\cat B$ are pretriangulated dg-categories and $T \colon H^0(\cat A) \to H^0(\cat B)$ is a triangulated functor, a \emph{dg-lift} of $T$ is a quasi-functor $F \colon \cat A \to \cat B$ such that
\begin{equation}
H^0(F) \cong T.    
\end{equation}
If $F$ is uniquely determined up to isomorphism of quasi-functors, we will say that the dg-lift is \emph{unique}. Uniqueness of dg-lifts is the main topic of this paper.

\subsubsection{Opposite quasi-functors and adjoints} \label{subsubsection:opposite_qfun}
The claims in this part follow from the results in \cite{genovese-adjunctions}.

Let $F \colon \cat A \to \cat B$ be a quasi-functor. By using a suitable duality operation, we are able to define the \emph{opposite quasi-functor}
\begin{equation}
\opp{F} \colon \opp{\cat A} \to \opp{\cat B}.
\end{equation}
Essentially (and forgetting about h-projective resolutions, of both quasi-functors and dg-categories) we define a $\cat B$-$\cat A$-dg-bimodule $\dual(F)$ as follows:
\[
\dual(F)(A,B) = \dgm(\cat B)(F_A, \cat B(-,B)).
\]
Since $F$ is a quasi-functor, we deduce \cite[Proposition 5.9]{genovese-adjunctions} that $\dual(F)$ is \emph{left quasi-representable}, namely, for any object $A \in \cat A$ we have a quasi-isomorphism:
\[
\cat B(\Phi_F(A),-) \xrightarrow{\sim} \dual(F)(A,-).
\]
This actually means that $\dual(F)$ can be identified with a quasi-functor
\[
\opp{F} \colon \opp{\cat A} \to \opp{\cat B},
\]
as we claimed.

If
\[
\varphi \colon F \to G
\]
is a morphism of quasi-functors, by taking opposite we get a morphism
\begin{equation}
 \opp{\varphi} \colon \opp{G} \to \opp{F}
\end{equation}

We also remark that taking opposites quasi-functors yields an involution:
\begin{equation}
    \begin{split}
        \opp{(\opp{F})} & \cong F, \\
        \opp{(\opp{\varphi})} & = \varphi \colon F \to G.
    \end{split}
\end{equation}

\subsection{t-structures and derived injectives}

We will work with \emph{t-structures} on triangulated categories and their dg-enhancements. If $\cat A$ is a pretriangulated dg-category, a t-structure on $\cat A$ is just a t-structure on $H^0(\cat A)$ in the sense of \cite{beilinson-bernstein-deligne-perverse}. A quasi-functor $F \colon \cat A \to \cat B$ between pretriangulated dg-categories with t-structures is (left or right) t-exact if $H^0(F)$ is (left or right) t-exact.

T-structures allow us to extend the theory and features of abelian categories to the derived framework. In particular, we are able to define a more general notion of injective object:
\begin{definition}[{\cite[\S 5.1]{rizzardo-vdb-nonFM}, \cite{genovese-lowen-vdb-dginj}}]
    Let $\cat T$ be a triangulated category with a t-structure, and let $I \in \operatorname{Inj}(\cat T^\heartsuit)$ be an injective object in the heart $\cat T^\heartsuit$ of the t-structure. The \emph{derived injective} associated to $I$ is an object $L(I) \in \cat T$ such that there is an isomorphism of functors
    \[
    \cat T^\heartsuit(H^0(-),I) \cong \cat T(-,L(I)).
    \]

    We say that an object $E \in \cat T$ is a derived injective if it is of the form $L(I)$ for some injective object $I \in \cat T^\heartsuit$.
\end{definition}
\begin{remark}
    If $\cat A$ is a pretriangulated dg-category endowed with a t-structure, we may define a \emph{dg-category of derived injectives} $\operatorname{DGInj}(\cat A)$ as the full dg-subcategory of $\cat A$ spanned by the derived injective objects. This dg-category is cohomologically concentrated in nonpositive degrees.
\end{remark}
Derived injectives are used in \cite{genovese-lowen-vdb-dginj} to prove a reconstruction result for t-structures in terms of \emph{twisted complexes of derived injectives} and in \cite{genovese-ramos-gabrielpopescu} as tool to prove a version of the Gabriel-Popescu theorem for pretriangulated dg-categories endowed with t-structures. 

Here, we will not actually need the general theory and concentrate just on more classical derived categories. It turns out that, in that case, derived injectives are just the same as the ordinary injective objects:
\begin{lemma} \label{lemma:dginj_dercatab}
Let $\mathfrak A$ be a Grothendieck abelian category, and let $\dercompdg(\mathfrak A)$ be its derived dg-category, i.e. a chosen (unique \cite{canonoaco-stellari-dgenh-grothendieck} \cite{canonaco-stellari-neeman-dgenh-all}) dg-enhancement of the derived category $\dercomp(\mathfrak A)$. We shall make the identification $H^0(\dercompdg(\mathfrak A)) = \dercomp(\mathfrak A)$.

Then, derived injectives of $\dercompdg(\mathfrak A)$ coincide with the injectives in $\mathfrak A$. In particular, the dg-category $\operatorname{DGInj}(\dercompdg(\mathfrak A))$ is cohomologically concentrated in degree $0$ and it is henceforth quasi-equivalent to the $\basering k$-linear category $\Inj(\mathfrak A)$.
\end{lemma}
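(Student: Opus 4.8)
The plan is to verify directly that, for an injective object $I \in \Inj(\mathfrak A)$, the derived injective $L(I)$ exists and is given by $I$ itself, regarded as a complex concentrated in degree $0$. Recall that the heart of the standard t-structure on $\dercomp(\mathfrak A)$ is $\mathfrak A$, embedded as complexes concentrated in degree $0$, and that $H^0$ is the usual cohomology functor. Unwinding the definition of derived injective, what I must produce is therefore a natural isomorphism
\[
\dercomp(\mathfrak A)(X, I) \cong \mathfrak A(H^0(X), I), \qquad X \in \dercomp(\mathfrak A),
\]
with $I$ placed in degree $0$ on the left-hand side.

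First I would compute the left-hand side. Since $I$, concentrated in degree $0$, is a one-term complex of injectives, it is h-injective, so no resolution is needed and $\dercomp(\mathfrak A)(X, I) = H^0 \Hom^\bullet(X, I)$, where $\Hom^\bullet(X,I)$ is the mapping complex. As $I$ sits in degree $0$, this complex has $n$-th term $\Hom(X^{-n}, I)$ with differential induced by $d_X$, so that
\[
H^0 \Hom^\bullet(X, I) = \frac{\{f \colon X^0 \to I : f d_X^{-1} = 0\}}{\{g d_X^0 : g \colon X^1 \to I\}}.
\]
Then I would identify this with $\Hom_{\mathfrak A}(H^0(X), I)$ using injectivity of $I$: writing $Z = \Kern d_X^0$ and $B = \Img d_X^{-1}$, a map $H^0(X) = Z/B \to I$ is a map $Z \to I$ killing $B$; by injectivity it extends to some $f \colon X^0 \to I$, and the condition $f|_B = 0$ is exactly $f d_X^{-1} = 0$. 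Two extensions differ by a map $X^0 \to I$ vanishing on $Z$, i.e. factoring through $\Img d_X^0 \hookrightarrow X^1$, which by injectivity is precisely a map of the form $g d_X^0$. This yields the displayed isomorphism, manifestly natural in $X$, proving $L(I) \cong I$ in degree $0$ and hence that derived injectives are exactly the injectives of $\mathfrak A$.

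For the final assertions I would first record that, for $I, J \in \Inj(\mathfrak A)$,
\[
H^n \dercompdg(\mathfrak A)(I, J) = \dercomp(\mathfrak A)(I, J[n]) = \Ext^n_{\mathfrak A}(I, J),
\]
which vanishes for $n \neq 0$ (for $n > 0$ because $J$ is injective, for $n < 0$ always) and equals $\Hom_{\mathfrak A}(I, J)$ for $n = 0$. Thus $\operatorname{DGInj}(\dercompdg(\mathfrak A))$ has hom-complexes with cohomology concentrated in degree $0$. To upgrade the resulting equivalence $H^0(\operatorname{DGInj}(\dercompdg(\mathfrak A))) \simeq \Inj(\mathfrak A)$ to a quasi-equivalence, I would use the standard truncation zig-zag
\[
\operatorname{DGInj}(\dercompdg(\mathfrak A)) \hookleftarrow \tau^{\leq 0} \operatorname{DGInj}(\dercompdg(\mathfrak A)) \to \Inj(\mathfrak A),
\]
where $\tau^{\leq 0}$ is applied hom-wise and the right-hand arrow is the canonical projection onto $H^0$: both dg-functors are the identity on objects and, by the degree-$0$ concentration, quasi-isomorphisms on hom-complexes, hence quasi-equivalences.

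The main obstacle is the middle step: the clean identification $H^0 \Hom^\bullet(X, I) \cong \Hom_{\mathfrak A}(H^0(X), I)$ rests entirely on the extension and lifting properties of the injective object $I$, and some care is needed to check that the chosen model for $\dercompdg(\mathfrak A)$ computes $\dercomp(\mathfrak A)(X, I)$ as the naive mapping complex (equivalently, that $I$ in degree $0$ is h-injective so that no resolution intervenes) and that the isomorphism is natural in $X$. Everything else is formal.
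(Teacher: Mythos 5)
Your proof is correct, but it takes a more hands-on route than the paper's. The paper works structurally with the t-structure: it first reduces $[X,I]\cong[\tau_{\geq 0}X,I]$, then uses the truncation triangle $H^0(X)\to\tau_{\geq 0}X\to\tau_{\geq 1}X$ and reduces everything to the two vanishing statements $[\tau_{\geq 1}X,I]=0$ and $[(\tau_{\geq 1}X)[-1],I]=0$, which follow from the fact that maps in $\dercomp(\mathfrak A)$ into an injective from a complex concentrated in nonnegative degrees are just homotopy classes of chain maps. You instead invoke h-injectivity of the one-term complex $I$ to identify $\dercomp(\mathfrak A)(X,I)$ with $H^0\Hom^\bullet(X,I)$ and then compute that $H^0$ explicitly, using the extension property of $I$ twice: once along $Z\hookrightarrow X^0$ for surjectivity of the restriction map, and once along $\Img d^0_X\hookrightarrow X^1$ to identify the kernel with the coboundaries $\{g d^0_X\}$. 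Both arguments hinge on the same underlying ingredient (derived homs into bounded-below complexes of injectives are computed in $\hocomp(\mathfrak A)$), but yours is an explicit cycle/boundary computation valid by pure diagram chase, while the paper's slots directly into the truncation formalism it uses throughout. On the final clause you are actually more complete than the paper: you compute all $H^n$ of the hom complexes as $\Ext^n_{\mathfrak A}(I,J)$ and spell out the quasi-equivalence with $\Inj(\mathfrak A)$ via the standard zig-zag $\DGInj(\dercompdg(\mathfrak A))\hookleftarrow\tau^{\leq 0}\DGInj(\dercompdg(\mathfrak A))\to\Inj(\mathfrak A)$, whereas the paper only checks the vanishing of $H^{-i}$ for $i>0$ (positive degrees being covered by its earlier remark that dg-categories of derived injectives are always cohomologically concentrated in nonpositive degrees) and leaves the quasi-equivalence implicit. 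The one point to keep straight, which you flag yourself, is that the dg-enhancement only enters in this last clause, where $H^n$ of the enhancement's hom complex is $\dercomp(\mathfrak A)(I,J[n])$ by definition; the first claim is purely a statement about the triangulated category, so no compatibility between the enhancement and the naive mapping complex needs to be verified there.
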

\begin{proof}
For simplicity, we denote by $[-,-]$ the hom spaces in $\dercomp(\mathfrak A)$. Let $I \in \operatorname{Inj}(\mathfrak A)$. We want to prove that there is a natural isomorphism
\[
[X,I] \cong [H^0(X),I]
\]
for all $X$. 

First, we have $[X,I] \cong [\tau_{\geq 0} X, I]$ since $I$ lies in $\mathfrak A$ which is the heart of the t-structure of $\dercomp(\mathfrak A)$. Then, consider the (functorial) distinguished triangle
\[
H^0(X) \to \tau_{\geq 0} X \to \tau_{\geq 1} X.
\]
Composing with $H^0(X) \to \tau_{\geq 0} X$, we get a morphism
\[
[\tau_{\geq 0} X, I] \to [H^0(X), I]
\]
To check that this is an isomorphism, it is enough to prove that
\[
[\tau_{\geq 1} X,I]=0 \quad \text{and} \quad [(\tau_{\geq 1}X)[-1], I]=0.
\]
This follows from the fact that $I$ being injective implies that
\[
[Y,I] = \hocomp(\mathfrak A)(Y,I)
\]
for $Y$ concentrated in nonnegative degrees, where $\hocomp(\mathfrak A)$ is the homotopy category of $\mathfrak A$.

The last part of the claim can be proved as follows: if $I,J$ are two injective objects (which are also derived injective thanks to the above argument), we have for $i>0$:
\begin{align*}
H^{-i}(\dercompdg(\mathfrak A)(I,J)) & \cong \dercomp(\mathfrak A)(I[i],J) \\
 & \cong \mathfrak A (H^i(I),H^0(J)) \\
 & \cong 0,
\end{align*}
recalling that $H^i(I) = 0$ since $i>0$ and $J=H^0(J)$.
\end{proof}
\begin{remark} \label{remark:twistedcomplexes_ordinarycomplexes}
    If $\mathfrak A$ is a Grothendieck abelian category, we can consider the dg-categories $\compdgplus(\Inj(\mathfrak A))$ and $\compdg(\Inj(\mathfrak A))$ of respectively bounded below and unbounded complexes of injective objects. Thanks to the above Lemma \ref{lemma:dginj_dercatab} and compatibility with quasi-equivalences, such dg-categories can be identified with the dg-categories of (bounded below or unbounded) twisted complexes of derived injectives $\Tw^+(\DGInj(\dercompdg(\mathfrak A)))$ and $\Tw(\DGInj(\dercompdg(\mathfrak A)))$. More details on dg-categories of twisted complexes can be found in \cite{genovese-lowen-vdb-dginj} (in the bounded case) and \cite{genovese-twisted-unbounded} (in the unbounded case).
\end{remark}

\section{Results on uniqueness of dg-lifts} \label{section:mainresults}

\subsection{Setup} \label{subsection:setup}
Let $\mathfrak A$ and $\mathfrak B$ be locally Noetherian Grothendieck abelian categories such that the derived categories $\dercomp(\mathfrak A)$ and $\dercomp(\mathfrak B)$ are compactly generated. We know \cite{canonaco-stellari-neeman-dgenh-all} \cite{canonoaco-stellari-dgenh-grothendieck} that such derived categories have unique dg-enhancements $\dercompdg(\mathfrak A)$ and $\dercompdg(\mathfrak B)$, which we shall fix once and for all. Moreover, we shall often make the following identifications:
\[
H^0(\dercompdg(\mathfrak A)) = \dercomp(\mathfrak A), \qquad H^0(\dercompdg(\mathfrak B)) = \dercomp(\mathfrak B).
\]

We fix a quasi-functor
\[
F \colon \dercompdg(\mathfrak A) \to \dercompdg(\mathfrak B)
\]
and we assume that it has both a left adjoint $L_F$ and a right adjoint $R_F$, in the sense of adjoint quasi-functors \cite{genovese-adjunctions}. This is equivalent \cite[Lemma 2.1.3]{genovese-ramos-gabrielpopescu} to requiring that $H^0(F)$ has both a left and a right adjoint. Moreover, we assume that the left adjoint $L_F$ is \emph{t-exact} with respect to the canonical t-structures on $\dercompdg(\mathfrak A)$ and $\dercompdg(\mathfrak B)$. Thanks to \cite[Proposition 2.3.10]{genovese-ramos-gabrielpopescu} and Lemma \ref{lemma:dginj_dercatab}, we know that $F$ maps $\Inj(\mathfrak A)$ to $\Inj(\mathfrak B)$.

For simplicity, we set $\varcat I_{\mathfrak A} = \Inj(\mathfrak A)$ and $\varcat I_{\mathfrak B} = \Inj(\mathfrak B)$.  We denote by $\compdg(\varcat I_{\mathfrak A})$ and $\compdg(\varcat I_{\mathfrak B})$ the dg-categories of complexes over $\varcat I_{\mathfrak A}$ and $\varcat I_{\mathfrak B}$. They are dg-enhancements of the \emph{homotopy category of injectives} $\hocomp(\Inj(\mathfrak A))$ and $\hocomp(\Inj(\mathfrak B))$. Our assumptions on $\mathfrak A$ and $\mathfrak B$ guarantee that $\Inj(\mathfrak A)$ and $\Inj(\mathfrak B)$ are closed under arbitrary direct sums in $\mathfrak A$ and $\mathfrak B$. Hence, $\compdg(\varcat I_{\mathfrak A})$ and $\compdg(\varcat I_{\mathfrak B})$ have arbitrary (strict) direct sums, taken termwise.

Under our assumptions, \cite[Corollary 4.3]{krause-stable} and the theory of dg-quotients \cite{drinfeld-dgquotients} yield localizations at the dg-level
\begin{equation}
\begin{split}
\begin{tikzcd}[ampersand replacement=\&]
	\compdg(\varcat I_{\mathfrak A}) \& \dercompdg(\mathfrak A),
	\arrow[from=1-1, to=1-2]
	\arrow[shift right=2, from=1-2, to=1-1]
	\arrow[shift left=2, from=1-2, to=1-1]
\end{tikzcd} \\ 
\begin{tikzcd}[ampersand replacement=\&]
	\compdg(\varcat I_{\mathfrak B}) \& \dercompdg(\mathfrak B).
	\arrow[from=1-1, to=1-2]
	\arrow[shift right=2, from=1-2, to=1-1]
	\arrow[shift left=2, from=1-2, to=1-1]
\end{tikzcd}
\end{split}
\end{equation}
The localization quasi-functors $\delta_{\mathfrak A} \colon \compdg(\varcat I_{\mathfrak A}) \to \dercompdg(\mathfrak A)$ and $\delta_{\mathfrak B} \colon \compdg(\varcat I_{\mathfrak B}) \to \dercompdg(\mathfrak B)$ have both \emph{quasi-fully faithful} (i.e fully faithful after taking $H^0$) left and right adjoints:
\begin{equation}
    q_{\mathfrak A} \dashv \delta_{\mathfrak A} \dashv r_{\mathfrak A}, \qquad q_{\mathfrak B} \dashv \delta_{\mathfrak B} \dashv r_{\mathfrak B}.
\end{equation}

We also set:
\begin{equation}
F'_l = q_{\mathfrak B} \circ F \circ \delta_{\mathfrak A} \colon \compdg(\varcat I_{\mathfrak A}) \to \compdg(\varcat I_{\mathfrak B}).
\end{equation}
Being a composition of left adjoint quasi-functors, this is also a left adjoint quasi-functor. In particular, it is cocontinuous (namely, its $H^0$ preserves arbitrary direct sums).

We now denote by $\compdgplus(\varcat I_{\mathfrak A})$  the full dg-subcategory of $\compdg(\varcat I_{\mathfrak A})$ spanned by complexes (strictly) bounded from below; we also denote by $\dercomp^+(\varcat I_{\mathfrak A})$ the usual full dg-subcategory of $\dercompdg(\mathfrak A)$ spanned by complexes cohomologically bounded from below. We may give analogous definitions for $\mathfrak B$. We shall denote by
\begin{equation}
    \begin{split}
        i_{\mathfrak A}  \colon \dercompdgplus(\mathfrak A) & \to \dercompdg(\mathfrak A), \\
        i'_{\mathfrak A} \colon \compdgplus(\varcat I_{\mathfrak A}) &\to \compdg(\varcat I_{\mathfrak A})
    \end{split}
\end{equation}
the inclusion dg-functors.

We also recall the well-known result that the localization quasi-functor $\delta_{\mathfrak A} \colon \compdg(\varcat I_{\mathfrak A}) \to \dercompdg(\mathfrak A)$ restricts to a quasi-equivalence (i.e. an equivalence after taking $H^0$):
\[
\delta_{\mathfrak A}^+ \colon \compdgplus(\varcat I_{\mathfrak A}) \xrightarrow{\sim} \dercompdgplus(\mathfrak A).
\]

We can put our data in the following diagram:
\begin{equation} \label{diagram:setup}
\begin{tikzcd}[ampersand replacement=\&]
	{\compdgplus(\varcat I_{\mathfrak A})} \& {\compdg(\varcat I_{\mathfrak A})} \& {\compdg(\varcat I_{\mathfrak B})} \\
	{\dercompdgplus(\mathfrak A)} \& {\dercompdg(\mathfrak A)} \& {\dercompdg(\mathfrak B).}
	\arrow["{i'_{\mathfrak A}}", from=1-1, to=1-2]
	\arrow["{F'_l}", from=1-2, to=1-3]
	\arrow["{\delta^+_{\mathfrak A}}"', from=1-1, to=2-1]
	\arrow["{i_{\mathfrak A}}"', from=2-1, to=2-2]
	\arrow["F"', from=2-2, to=2-3]
	\arrow["{\delta_{\mathfrak A}}"', from=1-2, to=2-2]
	\arrow["{\delta_{\mathfrak B}}"', from=1-3, to=2-3]
\end{tikzcd}
\end{equation}
This diagram is commutative up to isomorphism of quasi-functors. The left square is clearly commutative; as for the right square, we compute:
\begin{align*}
    \delta_{\mathfrak B} F'_l & = \delta_{\mathfrak B} q_{\mathfrak B} F \delta_{\mathfrak A} \\
    & \cong F \delta_{\mathfrak A},
\end{align*}
because $q_{\mathfrak B}$ is quasi-fully faithful and the unit morphism $1 \to \delta_{\mathfrak B} q_{\mathfrak B}$ of the adjunction $q_{\mathfrak B} \dashv \delta_{\mathfrak B}$ is an isomorphism of quasi-functors.

\subsection{Uniqueness of dg-lifts in the bounded below case}
A direct application of the ``correspondence'' result \cite[Theorem 1.4]{genovese-lowen-vdb-dginj} gives us the following result (also recall Remark \ref{remark:twistedcomplexes_ordinarycomplexes}) :
\begin{proposition} \label{proposition:dglift_bounded}
 Let $\cat A$ and $\cat B$ be pretriangulated dg-categories endowed with non-degenerate t-structures which are bounded from below: $\cat A^+ = \cat A$ and $\cat B^+ = \cat B$. Moreover, assume that such t-structures have enough derived injectives, and that the full dg-subcategories of derived injectives are (cohomologically) concentrated in degree $0$. In particular, they are quasi-equivalent to the $\basering k$-linear categories of injectives in the hearts.
 
 Moreover, let
 \[
 F, G \colon \cat A \to \cat B
 \]
 be quasi-functors admitting t-exact left adjoints. Then, if $H^0(F) \cong H^0(G)$ as triangulated functors $H^0(\cat A) \to H^0(\cat B)$, we conclude that $F \cong G$ as quasi-functors.
\end{proposition}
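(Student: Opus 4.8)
The plan is to deduce the statement as a direct application of the ``correspondence'' result \cite[Theorem 1.4]{genovese-lowen-vdb-dginj}, combined with Remark \ref{remark:twistedcomplexes_ordinarycomplexes}. Since the t-structures on $\cat A$ and $\cat B$ are non-degenerate, bounded below, and have enough derived injectives, the reconstruction result recovers both dg-categories as bounded below twisted complexes of their derived injectives, $\cat A \simeq \Tw^+(\DGInj(\cat A))$ and $\cat B \simeq \Tw^+(\DGInj(\cat B))$; moreover, the derived injectives being concentrated in degree $0$ means that $\DGInj(\cat A)$ and $\DGInj(\cat B)$ are quasi-equivalent to the ordinary $\basering k$-linear categories $\Inj(\cat A^\heartsuit)$ and $\Inj(\cat B^\heartsuit)$. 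Under this dictionary, a quasi-functor is (up to isomorphism) determined by its restriction to derived injectives, so it will suffice to show that $F$ and $G$ restrict to $\DGInj(\cat A)$ and that these restrictions are isomorphic.

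First I would check that $F$ and $G$ preserve derived injectives, so that the restrictions $F|_{\DGInj(\cat A)}, G|_{\DGInj(\cat A)} \colon \DGInj(\cat A) \to \DGInj(\cat B)$ are defined. This is the dg-analogue of the classical fact that the right adjoint of an exact functor preserves injectives, and it is exactly the mechanism already used in \S \ref{subsection:setup} via \cite[Proposition 2.3.10]{genovese-ramos-gabrielpopescu}. Concretely, writing $L_F \dashv F$ for the t-exact left adjoint and $L(I)$ for the derived injective attached to $I \in \Inj(\cat A^\heartsuit)$, one computes
\[
\cat B(-, F(L(I))) \cong \cat A(L_F(-), L(I)) \cong \cat A^\heartsuit(H^0(L_F(-)), I) \cong \cat B^\heartsuit(H^0(-), I'),
\]
where $I' \in \Inj(\cat B^\heartsuit)$ is the image of $I$ under the heart-level right adjoint of the exact functor induced by $L_F$; the t-exactness of $L_F$ is what makes the induced heart-level functor exact and hence its right adjoint injective-preserving. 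Thus $F(L(I)) \cong L(I')$ is again a derived injective, and the same holds for $G$.

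Next I would show the two restrictions agree. Because the hom-complexes of $\DGInj(\cat A)$ and $\DGInj(\cat B)$ are concentrated in degree $0$ (Lemma \ref{lemma:dginj_dercatab} in the classical case, by hypothesis in general), an isomorphism of the restricted dg-functors in $H^0(\Fundg(\DGInj(\cat A), \DGInj(\cat B)))$ is nothing but an ordinary natural isomorphism of $\basering k$-linear functors, detected entirely at the level of $H^0$. The hypothesis $H^0(F) \cong H^0(G)$ provides a natural isomorphism of triangulated functors; restricting it to the objects of $\DGInj(\cat A)$, which are precisely the injectives of $\cat A^\heartsuit$, yields a natural isomorphism $F|_{\DGInj(\cat A)} \cong G|_{\DGInj(\cat A)}$.

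With isomorphic restrictions in hand, the correspondence result \cite[Theorem 1.4]{genovese-lowen-vdb-dginj} upgrades this to $F \cong G$ as quasi-functors. I expect the only genuine subtlety to be matching our hypotheses to those of that theorem: one must confirm that ``admitting a t-exact left adjoint'' places $F$ and $G$ in the class of quasi-functors for which restriction to derived injectives is fully faithful on isomorphism classes -- equivalently, that each arises as the twisted-complex extension of its restriction -- which is exactly where the preservation-of-injectives step is used. The remainder is the bookkeeping of transporting the degree-$0$ identifications through the quasi-equivalences $\cat A \simeq \Tw^+(\DGInj(\cat A))$ and $\cat B \simeq \Tw^+(\DGInj(\cat B))$.
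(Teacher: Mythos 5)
Your proposal is correct and takes essentially the same approach as the paper: restrict $F$ and $G$ to the dg-subcategories of derived injectives, use the degree-$0$ concentration to identify the restricted quasi-functors with their $H^0$'s (so that $H^0(F)\cong H^0(G)$ yields an isomorphism of the restrictions), and then apply \cite[Theorem 1.4]{genovese-lowen-vdb-dginj}. The only difference is presentational: you spell out the preservation of derived injectives via the t-exact left adjoint, a step the paper leaves implicit by citing \cite[Proposition 2.3.10]{genovese-ramos-gabrielpopescu} in its setup.
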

\begin{proof}
From $H^0(F) \cong H^0(G)$ we deduce that $H^0(F_{|\DGInj(\cat A)}) \cong H^0(G_{|\DGInj(\cat A)})$, which we can view as functors $H^0(\DGInj(\cat A)) \to H^0(\DGInj(\cat B))$. Now, by hypothesis we can identify
\[
H^0(\DGInj(\cat A)) = \DGInj(\cat A), \qquad H^0(\DGInj(\cat B)) = \DGInj(\cat B).
\]
With this identifications, we have $H^0(F_{|\DGInj(\cat A)}) \cong F_{|\DGInj(\cat A)}$ and $H^0(G_{|\DGInj(\cat A)}) \cong G_{|\DGInj(\cat A)}$ (quasi-functors between $\basering k$-linear categories can be identified with their $H^0$). Hence, we have $F_{|\DGInj(\cat A)} \cong G_{|\DGInj(\cat A)}$ as quasi-functors $\DGInj(\cat A) \to \DGInj(\cat B)$, from which we conclude that $F \cong G$ applying \cite[Theorem 1.4]{genovese-lowen-vdb-dginj}.
\end{proof}

We immediately deduce the following direct consequence:
\begin{corollary} \label{corollary:dglift_bounded}
Let $F,G \colon \dercompdg(\mathfrak A) \to \dercompdg(\mathfrak B)$ as in the setup \S \ref{subsection:setup}. Being right adjoints of t-exact quasi-functors, they are left t-exact \cite[Proposition 2.2.7]{genovese-ramos-gabrielpopescu}, hence they induce quasi-functors
\[
F^+, G^+ \colon \dercompdgplus(\mathfrak A) \to \dercompdgplus(\mathfrak B).
\]

If $H^0(F^+) \cong H^0(G^+)$ then $F^+ \cong G^+$ as quasi-functors.
\end{corollary}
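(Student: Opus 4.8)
The plan is to verify that the bounded below derived dg-categories $\dercompdgplus(\mathfrak A)$ and $\dercompdgplus(\mathfrak B)$, together with the quasi-functors $F^+$ and $G^+$, satisfy all the hypotheses of Proposition \ref{proposition:dglift_bounded}, and then simply invoke it. The corollary is indeed a \emph{direct} consequence, so the bulk of the work is a verification of hypotheses rather than a new argument.

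First I would check the structural hypotheses on the categories. The dg-categories $\dercompdgplus(\mathfrak A)$ and $\dercompdgplus(\mathfrak B)$ are pretriangulated and carry the restriction of the standard t-structure, which is automatically non-degenerate and is bounded from below, since every object is cohomologically bounded below. Their hearts are again $\mathfrak A$ and $\mathfrak B$, which are Grothendieck and hence have enough injectives; as every cohomologically bounded below object admits a bounded below injective resolution, these t-structures have enough derived injectives. By Lemma \ref{lemma:dginj_dercatab} the derived injectives coincide with the ordinary injectives of $\mathfrak A$ and $\mathfrak B$ and are cohomologically concentrated in degree $0$, so $\DGInj(\dercompdgplus(\mathfrak A))$ and $\DGInj(\dercompdgplus(\mathfrak B))$ are quasi-equivalent to the $\basering k$-linear categories $\Inj(\mathfrak A)$ and $\Inj(\mathfrak B)$, exactly as required.

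The step that requires a genuine argument is the production of t-exact left adjoints of $F^+$ and $G^+$. The left adjoints $L_F, L_G$ are t-exact by assumption; in particular they are left t-exact, hence send $\dercomp^{\geq n}$ into $\dercomp^{\geq n}$ and therefore preserve cohomologically bounded below objects. Thus $L_F$ and $L_G$ restrict to quasi-functors $\dercompdgplus(\mathfrak B) \to \dercompdgplus(\mathfrak A)$, which remain t-exact. Since $\dercompdgplus$ is a full dg-subcategory and, in each adjoint pair, both members preserve bounded below objects, the adjunction isomorphisms restrict, so that these restrictions are left adjoints of $F^+$ and $G^+$ as adjoint quasi-functors. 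The main point to be careful about is precisely this restriction of an adjunction to full dg-subcategories in the quasi-functorial sense; I expect it to follow from the formalism of \cite{genovese-adjunctions} once one knows that every functor involved preserves the relevant subcategory, but it is the only nontrivial verification.

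Having checked all the hypotheses, I would conclude by applying Proposition \ref{proposition:dglift_bounded} to the quasi-functors $F^+, G^+ \colon \dercompdgplus(\mathfrak A) \to \dercompdgplus(\mathfrak B)$: from $H^0(F^+) \cong H^0(G^+)$ the proposition yields $F^+ \cong G^+$ as quasi-functors, which is the assertion.
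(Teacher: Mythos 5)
Your proposal is correct and follows the same route as the paper, which offers no separate argument at all: it treats the corollary as an immediate application of Proposition \ref{proposition:dglift_bounded}, exactly the verification you spell out. The one point you flag as nontrivial---restricting the adjunction $L_F \dashv F$ to the bounded below subcategories in the quasi-functorial sense---is handled by the already-cited \cite[Lemma 2.1.3]{genovese-ramos-gabrielpopescu}, since existence of an adjoint quasi-functor is equivalent to existence of an adjoint of $H^0$, and t-exactness of a quasi-functor is by definition a property of its $H^0$.
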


\subsection{Uniqueness of dg-lifts in the unbounded case}
We now state the main result of the paper.
\begin{theorem} \label{theorem:dglift_unbounded}
Let $F,G \colon \dercompdg(\mathfrak A) \to \dercompdg(\mathfrak B)$ be quasi-functors as in the setup \S \ref{subsection:setup}. Then, if $H^0(F) \cong H^0(G)$, we conclude that $F \cong G$.
\end{theorem}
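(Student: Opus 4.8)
The plan is to transport the entire comparison to the dg-enhancements $\compdg(\varcat I_{\mathfrak A})$, $\compdg(\varcat I_{\mathfrak B})$ of the homotopy categories of injectives and to exploit cocontinuity there. First I would reduce the claim to proving $F'_l \cong G'_l$. Since $q_{\mathfrak A}$ is quasi-fully faithful, the unit of $q_{\mathfrak A} \dashv \delta_{\mathfrak A}$ gives $\delta_{\mathfrak A} q_{\mathfrak A} \cong \mathrm{id}$; combining this with the right-hand commutativity $\delta_{\mathfrak B} F'_l \cong F \delta_{\mathfrak A}$ already established for diagram \eqref{diagram:setup}, I obtain
\[
\delta_{\mathfrak B} F'_l q_{\mathfrak A} \cong F \delta_{\mathfrak A} q_{\mathfrak A} \cong F,
\]
and symmetrically $\delta_{\mathfrak B} G'_l q_{\mathfrak A} \cong G$. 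Hence an isomorphism $F'_l \cong G'_l$ of quasi-functors immediately produces $F \cong G$, so it suffices to compare $F'_l$ and $G'_l$.

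Next I would check that $F'_l$ and $G'_l$ agree after restriction to the bounded-below subcategory $\compdgplus(\varcat I_{\mathfrak A})$. Restricting the given isomorphism $H^0(F) \cong H^0(G)$ to cohomologically bounded-below objects yields $H^0(F^+) \cong H^0(G^+)$, so Corollary \ref{corollary:dglift_bounded} gives $F^+ \cong G^+$. Transporting this through the quasi-equivalence $\delta^+_{\mathfrak A}$ and the left t-exactness of $F$ (which provides $F i_{\mathfrak A} \cong i_{\mathfrak B} F^+$), together with the commutativity $\delta_{\mathfrak A} i'_{\mathfrak A} \cong i_{\mathfrak A} \delta^+_{\mathfrak A}$ of the left square of \eqref{diagram:setup}, I compute
\[
F'_l i'_{\mathfrak A} = q_{\mathfrak B} F \delta_{\mathfrak A} i'_{\mathfrak A} \cong q_{\mathfrak B} i_{\mathfrak B} F^+ \delta^+_{\mathfrak A} \cong q_{\mathfrak B} i_{\mathfrak B} G^+ \delta^+_{\mathfrak A} \cong G'_l i'_{\mathfrak A},
\]
so the restrictions of $F'_l$ and $G'_l$ to $\compdgplus(\varcat I_{\mathfrak A})$ are isomorphic as quasi-functors.

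Finally I would promote this bounded-below agreement to all of $\compdg(\varcat I_{\mathfrak A})$ using cocontinuity. Every complex of injectives $X$ is the filtered union of its brutal truncations $\sigma_{\geq -n}X$, which are bounded below; since the transition maps are degreewise split monomorphisms, this filtered colimit is a homotopy colimit, realized by a telescope $\cone\bigl(\bigoplus_n \sigma_{\geq -n}X \to \bigoplus_n \sigma_{\geq -n}X\bigr)$ built from coproducts and a single cone. As $F'_l$ and $G'_l$ are cocontinuous (being composites of left adjoints) and exact, each sends $X$ to the homotopy colimit of its values on the $\sigma_{\geq -n}X$, on which the two quasi-functors already coincide by the previous step. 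The main obstacle is to upgrade this objectwise comparison to a genuine \emph{isomorphism of quasi-functors}, coherently across all of $\compdg(\varcat I_{\mathfrak A})$, rather than a mere family of isomorphisms on objects; this is precisely the delicate dg-functorial study of brutal truncations and homotopy colimits carried out in \S\ref{section:extensionresult}, and it is the technical heart of the argument. Granting that extension result I obtain $F'_l \cong G'_l$, whence $F \cong G$ by the first step.
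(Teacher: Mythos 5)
Your proposal is correct and follows essentially the same route as the paper: reduce to comparing $F'_l$ and $G'_l$, use Corollary \ref{corollary:dglift_bounded} together with the commutativity of diagram \eqref{diagram:setup} to get $F'_l i'_{\mathfrak A} \cong G'_l i'_{\mathfrak A}$, and then invoke the extension result of \S\ref{section:extensionresult} (Proposition \ref{proposition:extendiso}) to promote this to $F'_l \cong G'_l$, correctly identifying that step as the technical heart. The only (immaterial) difference is that you recover $F$ from $F'_l$ via the left adjoint, $F \cong \delta_{\mathfrak B} F'_l q_{\mathfrak A}$, whereas the paper uses the right adjoint, $F \cong \delta_{\mathfrak B} F'_l r_{\mathfrak A}$; both work since $\delta_{\mathfrak A} q_{\mathfrak A} \cong 1 \cong \delta_{\mathfrak A} r_{\mathfrak A}$.
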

\begin{proof}
We first check that $F \cong G$ is actually equivalent to $F'_l \cong G'_l$. Indeed, we have by definition:
\[
F'_l = q_{\mathfrak B}F\delta_{\mathfrak A}, \qquad G'_l = q_{\mathfrak B} F \delta_{\mathfrak A}.
\]
From this, we deduce:
\begin{align*}
    \delta_{\mathfrak B} F'_l r_{\mathfrak A} & = \delta_{\mathfrak B} q_{\mathfrak B}F\delta_{\mathfrak A} r_{\mathfrak A} \\
    & \cong F,
\end{align*}
because $\delta_{\mathfrak B} q_{\mathfrak B} \cong 1$ and $\delta_{\mathfrak A} r_{\mathfrak A} \cong 1$, since $q_{\mathfrak B}$ and $r_{\mathfrak A}$ are fully faithful and part of the adjunctions $q_{\mathfrak B} \dashv \delta_{\mathfrak B}$ and $\delta_{\mathfrak A} \dashv r_{\mathfrak A}$. Analogously, we have:
\[
\delta_{\mathfrak B} G'_l r_{\mathfrak B} \cong G.
\]
From this, it is immediate to see that $F \cong G$ if and only if $F'_l \cong G'_l$.

Now, we apply Corollary \ref{corollary:dglift_bounded} and conclude that $F^+ \cong G^+$ as quasi-functors $\dercompdgplus(\mathfrak A) \to \dercompdgplus(\mathfrak B)$. This implies that (it is actually equivalent to)
\[
q_{\mathfrak B} i_{\mathfrak B} F^+ \cong q_{\mathfrak B} i_{\mathfrak B} G^+ \colon \dercompdgplus(\mathfrak A) \to \compdg(\varcat I_{\mathfrak B}),
\]
where $i_{\mathfrak B} \colon \dercompdgplus(\mathfrak B) \to \dercompdg(\mathfrak B)$ is the inclusion and $q_{\mathfrak B}$ is the quasi-fully faithful left adjoint to the localization $\delta_{\mathfrak B} \colon \compdg(\varcat I_{\mathfrak B}) \to \dercompdg(\mathfrak B)$. Clearly, $F^+$ and $G^+$ satisfy
\[
i_{\mathfrak B} F^+ \cong F i_{\mathfrak A}, \qquad i_{\mathfrak B} G^+ \cong G i_{\mathfrak A},
\]
so we deduce
\[
q_{\mathfrak B} F i_{\mathfrak A} \cong q_{\mathfrak B} G i_{\mathfrak A} \colon \dercompdgplus(\mathfrak A) \to \compdg(\varcat I_{\mathfrak B}).
\]
We now precompose with the quasi-equivalence $\delta^+_{\mathfrak A} \colon \compdgplus(\varcat I_{\mathfrak A}) \xrightarrow{\sim} \dercompdgplus(\mathfrak A)$ and deduce:
\[
q_{\mathfrak B} F i_{\mathfrak A} \delta_{\mathfrak A}^+ \cong q_{\mathfrak B} G i_{\mathfrak A} \delta^+_{\mathfrak A} \colon \compdgplus(\varcat I_{\mathfrak A}) \to \compdg(\varcat I_{\mathfrak B})
\]
Recalling \eqref{diagram:setup} we have an isomorphism $i_{\mathfrak A} \delta^+_{\mathfrak A} \cong \delta_{\mathfrak A} i'_{\mathfrak A}$, so we obtain
\[
q_{\mathfrak B} F \delta_{\mathfrak A} i'_{\mathfrak A} \cong q_{\mathfrak B} G \delta_{\mathfrak A} i'_{\mathfrak A},
\]
which actually means
\[
F'_l i'_{\mathfrak A} \cong G'_l i'_{\mathfrak A} \colon \compdgplus(\mathfrak A) \to \compdg(\mathfrak B).
\]

We would like to conclude from this that indeed $F'_l \cong G'_l$, from which we would finally get $F \cong G$ thanks to the first part of the proof. This is the really technical part of the proof, which we postpone to \S \ref{section:extensionresult} and Proposition \ref{proposition:extendiso}. Before doing that, we will present in \S \ref{subsection:applications} some applications.
\end{proof}

\subsection{Applications} \label{subsection:applications}
Theorem \ref{theorem:dglift_unbounded} has interesting applications to algebraic geometry. Before diving into that, we prove an easy result which ensures that uniqueness of dg-lifts is ``trasmitted to adjoints''.
\begin{lemma} \label{lemma:dglift_uniqueness_adjoints}
Let $\cat A$ and $\cat B$ be pretriangulated dg-categories and let $F,G \colon \cat A \to \cat B$ be quasi-functors. Moreover, assume that $H^0(F)$ and $H^0(G)$ have a left adjoint (or a right adjoint). This implies \cite[Lemma 2.1.3]{genovese-ramos-gabrielpopescu} \cite[Remark 3.9]{lowen-julia-tensordg-wellgen} that both $F$ and $G$ have left adjoints $F_l$ and $G_l$ (or right adjoints $F_r$ and $G_r$) and their $H^0$ yield adjoints of $H^0(F)$ and $H^0(G)$.

Then, the following are equivalent:
\begin{enumerate}
    \item $H^0(F) \cong H^0(G)$ implies $F \cong G$ as quasi-functors.
    \item $H^0(F_l) \cong H^0(G_l)$ implies $F_l \cong G_l$ as quasi-functors.
\end{enumerate}
The same result with right adjoints. The following are equivalent:
\begin{enumerate}
    \item $H^0(F) \cong H^0(G)$ implies $F \cong G$ as quasi-functors.
    \item $H^0(F_r) \cong H^0(G_r)$ implies $F_r \cong G_r$ as quasi-functors.
\end{enumerate}
\end{lemma}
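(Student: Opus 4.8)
The plan is to reduce the entire statement to two ``unconditional'' biconditionals, valid for the fixed pair $F,G$ and their left adjoints $F_l, G_l$, after which the equivalence of the two conditional statements is a purely formal manipulation. Concretely, I would first establish
\begin{align*}
F \cong G &\iff F_l \cong G_l, \\
H^0(F) \cong H^0(G) &\iff H^0(F_l) \cong H^0(G_l).
\end{align*}
Granting these, statement (1) -- which reads ``$H^0(F) \cong H^0(G)$ implies $F \cong G$'' -- is transported verbatim into statement (2) -- ``$H^0(F_l) \cong H^0(G_l)$ implies $F_l \cong G_l$'' -- by rewriting each side of the implication through its equivalent form; hence (1) $\iff$ (2). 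The right-adjoint case is identical after exchanging the roles of left and right throughout.

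The first biconditional is nothing but uniqueness of adjoints at the level of quasi-functors. If $F \cong G$, then since the left adjoint of a quasi-functor is determined up to isomorphism by the theory of adjoint quasi-functors of \cite{genovese-adjunctions}, I immediately get $F_l \cong G_l$. Conversely, if $F_l \cong G_l$, then $F$ and $G$ are both right adjoints of one and the same quasi-functor up to isomorphism; invoking uniqueness of right adjoints -- equivalently, the facts $(F_l)_r \cong F$ and $(G_l)_r \cong G$, where $(-)_r$ denotes the right adjoint -- yields $F \cong G$. This is the standard ``adjoints are unique'' argument, valid because quasi-functors, their morphisms, and composition assemble into a $2$-categorical structure with hom-categories $H^0(\RHom(-,-))$ in which adjunctions behave formally.

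The second biconditional is the very same argument carried out one level down, for ordinary adjoint functors of the triangulated categories $H^0(\cat A)$ and $H^0(\cat B)$. Here the hypothesis of the lemma is exactly what is needed: $H^0(F_l)$ is a left adjoint of $H^0(F)$ and $H^0(G_l)$ a left adjoint of $H^0(G)$, so uniqueness of ordinary adjoints gives $H^0(F) \cong H^0(G) \iff H^0(F_l) \cong H^0(G_l)$. Combining the two biconditionals with the observation of the first paragraph completes the proof, and the right-adjoint version follows by the symmetric argument.

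I do not expect any serious obstacle here; the only point requiring care is to make sure that the ``uniqueness of adjoints'' invoked at the quasi-functor level is genuinely available from \cite{genovese-adjunctions}, i.e.\ that the assignment $F \mapsto F_l$ is well defined up to isomorphism of quasi-functors and that $(F_l)_r \cong F$. Once this is in hand everything is formal, and in particular no explicit manipulation of dg-bimodules or h-projective resolutions is required.
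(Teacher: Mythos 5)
Your proposal is correct and follows essentially the same route as the paper: both arguments rest on exactly the two ingredients of uniqueness of adjoints at the quasi-functor level (from \cite{genovese-adjunctions}) and at the level of ordinary functors between $H^0(\cat A)$ and $H^0(\cat B)$, the only cosmetic difference being that you package them as two explicit biconditionals while the paper runs the same facts through a single implication chain. No gap to report.
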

\begin{proof}
    Let us assume (1) in the case of left adjoints. If $H^0(F_l) \cong H^0(G_l)$, we conclude that $H^0(F) \cong H^0(G)$ by uniqueness of (right) adjoints. Hence, by assumption, we have $F \cong G$. But again, adjoint quasi-functors are unique up to isomorphism, so we immediately conclude that $F_l \cong G_l$.

    The other parts of the proof follow from a similar argument and are left to the reader.
\end{proof}

\subsubsection{Uniqueness of dg-lifts}
Here, we will work with separated Noetherian schemes, so that we fall in the framework of our setup \S \ref{subsection:setup}. Indeed, if $X$ is a separated Noetherian scheme, the category $\operatorname{QCoh}(X)$ is a locally Noetherian Grothendieck abelian category such that $\dercomp(\operatorname{QCoh}(X))$ is compactly generated (see \cite{krause-stable} and \cite[Theorem 3.1.1]{bondal-vdb-generators}). Morever, the natural functor
\[
\dercomp(\operatorname{QCoh}(X)) \to \dercomp_{\operatorname{qc}}(X)
\]
is an equivalence (cf. \cite[09TN]{stacks-project}). $\dercomp_{\operatorname{qc}}(X)$ denotes the full subcategory of $\dercomp(X)=\dercomp(\Mod(\mathcal O_X))$ spanned by complexes with quasi-coherent cohomology. By construction (cf. \cite[06UP]{stacks-project}) this equivalence can be also described as a quasi-functor
\begin{equation}
\dercompdg(\operatorname{QCoh}(X)) \xrightarrow{\sim} \dercompdg_{\operatorname{qc}}(X).
\end{equation}
Here $\dercompdg_{\operatorname{qc}}(X)$ denotes the full dg-subcategory of $\dercompdg(X)=\dercompdg(\Mod(\mathcal O_X))$ spanned by complexes with quasi-coherent cohomology. We recall that, thanks to the results in \cite{canonaco-stellari-neeman-dgenh-all}, every dg-enhancement we have written so far is unique up to isomorphism in $\kat{Hqe}$.

Let $f \colon X \to Y$ be a \emph{flat} morphism of separated Noetherian schemes. We have the induced pushforward and pullback functors
\[
f_* \colon \operatorname{QCoh}(X) \to \operatorname{QCoh}(Y), \quad f^* \colon \operatorname{QCoh}(Y) \to \operatorname{QCoh}(X).
\]
By our flatness assumption $f^*$ is exact, hence it induces a \emph{t-exact} functor
\[
f^* \colon \dercomp(\operatorname{QCoh}(Y)) \to \dercomp(\operatorname{QCoh}(X)).
\]
We also have a derived pushforward functor
\[
\mathbb R f_* \colon \dercomp(\operatorname{QCoh}(X)) \to \dercomp(\operatorname{QCoh}(Y)).
\]
The derived pullback and pushforward are adjoint to each other:
\[
f^* \dashv \mathbb R f_* \colon  \dercomp(\operatorname{QCoh}(Y)) \leftrightarrows \dercomp(\operatorname{QCoh}(X)).
\]
Morever, we know \cite[0A9E]{stacks-project} that $\mathbb R f_*$ also has a right adjoint $f^\times$.

\begin{lemma} \label{lemma:dglift_geometric_existence}
The above triangulated functors can all be lifted to the differential graded framework. Namely, there exist quasi-functors
\begin{align*}
\mathbb R \tilde{f}_* \colon \dercompdg(\operatorname{QCoh}(X)) &\to \dercompdg(\operatorname{QCoh}(Y)),  \\
\tilde{f}^* \colon \dercompdg(\operatorname{QCoh}(Y)) &\to \dercompdg(\operatorname{QCoh}(X)), \\
\tilde{f}^\times \colon \dercompdg(\operatorname{QCoh}(Y)) &\to \dercompdg(\operatorname{QCoh}(X)),
\end{align*}
whose $H^0$ yield respectively $\mathbb R f_*, f^*, f^\times$, after identifying $H^0(\dercompdg(\operatorname{QCoh}(?))) = \dercomp(\operatorname{QCoh}(?))$.
\end{lemma}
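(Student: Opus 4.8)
The plan is to lift the \emph{exact} functor $f^*$ by hand and then obtain the other two quasi-functors purely formally, as iterated adjoints. Flatness of $f$ enters only through the exactness of $f^* \colon \operatorname{QCoh}(Y) \to \operatorname{QCoh}(X)$, and the three functors of interest are organised in a chain of adjunctions $f^* \dashv \mathbb R f_* \dashv f^\times$. The crucial tool is the result \cite[Lemma 2.1.3]{genovese-ramos-gabrielpopescu} \cite[Remark 3.9]{lowen-julia-tensordg-wellgen} already used above: a quasi-functor whose $H^0$ admits a right (resp. left) adjoint itself admits a right (resp. left) adjoint quasi-functor, and taking $H^0$ recovers an adjoint of the $H^0$ of the original.

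First I would lift $f^*$. Applying $f^*$ termwise gives a dg-functor $\compdg(\operatorname{QCoh}(Y)) \to \compdg(\operatorname{QCoh}(X))$ between the dg-categories of complexes. Since $f^*$ is exact it sends acyclic complexes to acyclic complexes; composing with the dg-localization $\compdg(\operatorname{QCoh}(X)) \to \dercompdg(\operatorname{QCoh}(X))$ therefore produces a dg-functor that annihilates the acyclics. By the universal property of the Drinfeld dg-quotient \cite{drinfeld-dgquotients} (which, by uniqueness of the enhancement, we are free to take as our model of $\dercompdg$) this factors through a quasi-functor
\[
\tilde f^* \colon \dercompdg(\operatorname{QCoh}(Y)) \to \dercompdg(\operatorname{QCoh}(X)),
\]
whose $H^0$ is, by construction, the pullback $f^* \colon \dercomp(\operatorname{QCoh}(Y)) \to \dercomp(\operatorname{QCoh}(X))$ (which is underived, $f^*$ being exact).

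Next I would produce the remaining lifts by adjunction. Since $H^0(\tilde f^*) = f^*$ has a right adjoint, namely $\mathbb R f_*$, the cited existence result supplies a right adjoint quasi-functor, which I denote $\mathbb R \tilde f_*$; by uniqueness of adjoints $H^0(\mathbb R \tilde f_*) \cong \mathbb R f_*$. Applying the same principle once more, $H^0(\mathbb R \tilde f_*) \cong \mathbb R f_*$ has the right adjoint $f^\times$ \cite[0A9E]{stacks-project}, so $\mathbb R \tilde f_*$ has a right adjoint quasi-functor $\tilde f^\times$ with $H^0(\tilde f^\times) \cong f^\times$. This yields all three lifts; in fact they come as an honest chain of adjoint quasi-functors $\tilde f^* \dashv \mathbb R \tilde f_* \dashv \tilde f^\times$, which is precisely the structure (with the t-exact left adjoint $\tilde f^*$) required to place $\mathbb R \tilde f_*$ in the setup of \S \ref{subsection:setup}.

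I expect the only genuinely nontrivial point to be the hand-built lift of $f^*$, and exactness is exactly what makes it routine: it lets one stay at the level of naive complexes and never choose a resolution. A direct dg-lift of $\mathbb R f_*$ would be more painful, since $f_*$ is merely left exact and one would have to make the passage to injective (or K-injective) resolutions dg-functorial; routing everything through adjoints of $\tilde f^*$ avoids this altogether, so I anticipate no real obstacle.
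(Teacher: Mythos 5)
Your proposal is correct, but it runs the paper's argument in the opposite direction. The paper's proof is essentially a citation: after identifying $\dercompdg(\operatorname{QCoh}(?))$ with $\dercompdg_{\operatorname{qc}}(?)$, it invokes Schn\"urer's six-operations enhancements \cite{schnurer-sixoperations} to produce the lift $\mathbb R\tilde f_*$ of $\mathbb R f_*$ directly --- this is exactly the ``painful'' functorial passage to K-injective resolutions you anticipated, outsourced to the literature --- and then obtains $\tilde f^*$ and $\tilde f^\times$ as the left and right adjoint quasi-functors of $\mathbb R\tilde f_*$ via \cite[Lemma 2.1.3]{genovese-ramos-gabrielpopescu}. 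You instead lift $f^*$ by hand (termwise pullback is a dg-functor, exactness from flatness makes it preserve acyclics, and the universal property of the Drinfeld dg-quotient \cite{drinfeld-dgquotients} makes it descend), and then take right adjoints twice with the same lemma. Both routes are valid; yours is more self-contained, trading the dependence on \cite{schnurer-sixoperations} for the dg-quotient universal property (which the paper already uses in its setup), and it makes flatness do visible work in the lifting step itself, whereas the paper's route would lift $\mathbb R f_*$ even without flatness. One point in your write-up deserves more care than the parenthetical ``by uniqueness of the enhancement, we are free to take the dg-quotient as our model'': bare uniqueness only provides a quasi-equivalence between your model and the paper's fixed $\dercompdg(\operatorname{QCoh}(?))$, not one compatible with the fixed identifications $H^0(\dercompdg(\operatorname{QCoh}(?))) = \dercomp(\operatorname{QCoh}(?))$, so a priori the transported quasi-functor has $H^0$ isomorphic to $\alpha_X \circ f^* \circ \alpha_Y^{-1}$ for unknown autoequivalences $\alpha_X, \alpha_Y$. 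This is cosmetic rather than a gap --- for the standard models a compatible quasi-equivalence exists by construction (e.g.\ by including h-injective complexes into the dg-quotient) --- but it is precisely the kind of issue the paper is careful about when it notes that the equivalence $\dercomp(\operatorname{QCoh}(X)) \simeq \dercomp_{\operatorname{qc}}(X)$ is itself realized by an explicit quasi-functor.
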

\begin{proof}
This can be proved using the results in \cite{schnurer-sixoperations}, after identifying $\dercompdg(\operatorname{QCoh}(?)) = \dercompdg_{\operatorname{qc}}(?)$. 

In particular, one can find a lift $\mathbb R\tilde{f}_*$ of $\mathbb R f_*$. Then, applying \cite[Lemma 2.1.3]{genovese-ramos-gabrielpopescu} we can find $\tilde{f}^*$ and $\tilde{f}^\times$ as left and right adjoint quasi-functors of $\mathbb R \tilde{f}_*$, which exist since $H^0(\mathbb R \tilde{f}_*) = \mathbb R f_*$ has left and right adjoints $f^*$ and $f^\times$.
\end{proof}

We can now apply Theorem \ref{theorem:dglift_unbounded} to get the following uniqueness result:
\begin{theorem} \label{theorem:dglift_geometric_uniqueness}
Let $X$ and $Y$ be  separated Noetherian schemes and let $f \colon X \to Y$ be a flat morphism. 

Let $F,G$ be quasi-functors between $\dercompdg(\operatorname{QCoh}(X))$ and $\dercompdg(\operatorname{QCoh}(Y))$ (or vice-versa) such that $H^0(F) \cong H^0(G)$ is isomorphic to one of the functors $\mathbb R f_*, f^*, f^\times$. Then, $F \cong G$ as quasi-functors. In other words, dg-lifts of $\mathbb R f_*, f^*, f^\times$ are unique.
\end{theorem}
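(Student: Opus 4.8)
The plan is to reduce all three cases to the single functor $\mathbb R f_*$, which is the one that fits the setup of \S \ref{subsection:setup} directly; uniqueness for $f^*$ and $f^\times$ then follows formally because they are its adjoints. Setting $\mathfrak A = \operatorname{QCoh}(X)$ and $\mathfrak B = \operatorname{QCoh}(Y)$, I would first note that these are locally Noetherian Grothendieck abelian categories with compactly generated derived categories, as recalled just before Lemma \ref{lemma:dglift_geometric_existence}, so the framework of \S \ref{subsection:setup} is available. The quasi-functor to plug in is the lift $\mathbb R \tilde{f}_*$ from Lemma \ref{lemma:dglift_geometric_existence}.

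Next I would verify the two structural hypotheses of the setup for $\mathbb R \tilde{f}_*$. Its left and right adjoints exist and are precisely the lifts $\tilde{f}^*$ and $\tilde{f}^\times$ of Lemma \ref{lemma:dglift_geometric_existence}, reflecting the geometric adjunctions $f^* \dashv \mathbb R f_* \dashv f^\times$. The remaining, crucial, point is that the left adjoint $\tilde{f}^*$ is t-exact: this is exactly where flatness of $f$ is used, since flatness makes the underived pullback $f^* \colon \operatorname{QCoh}(Y) \to \operatorname{QCoh}(X)$ exact, hence t-exact on derived categories for the canonical t-structures. With both hypotheses checked, Theorem \ref{theorem:dglift_unbounded} applies and gives $F \cong G$ whenever $H^0(F) \cong H^0(G) \cong \mathbb R f_*$, settling uniqueness of dg-lifts of $\mathbb R f_*$.

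For the two pullback-type functors I would invoke Lemma \ref{lemma:dglift_uniqueness_adjoints}, which transmits uniqueness of dg-lifts to adjoints. Since $f^*$ is the left adjoint and $f^\times$ the right adjoint of $\mathbb R f_*$, and $\mathbb R f_*$ itself has both adjoints, the left-adjoint version of the lemma yields uniqueness for $f^*$ and the right-adjoint version yields uniqueness for $f^\times$: given two dg-lifts of $f^*$ (resp. $f^\times$), passing to right (resp. left) adjoints produces two dg-lifts of $\mathbb R f_*$, which are isomorphic by the previous step, and uniqueness of adjoint quasi-functors then forces the original lifts to be isomorphic.

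All the genuine difficulty is absorbed into Theorem \ref{theorem:dglift_unbounded}, whose proof is completed only in \S \ref{section:extensionresult}; the present argument is essentially bookkeeping. The one place demanding care is the matching of abstract and geometric data — in particular, confirming that it is the \emph{left} adjoint $f^*$, and not the right adjoint $f^\times$, whose t-exactness is supplied by flatness, since the setup requires t-exactness precisely of the left adjoint. Beyond this I do not expect any obstacle.
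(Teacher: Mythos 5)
Your proposal is correct and follows exactly the paper's argument: uniqueness for $\mathbb R f_*$ via Theorem \ref{theorem:dglift_unbounded} (after checking that $\operatorname{QCoh}$ of separated Noetherian schemes fits the setup of \S \ref{subsection:setup}, with flatness supplying t-exactness of the left adjoint $f^*$), then transmission to the adjoints $f^*$ and $f^\times$ via Lemma \ref{lemma:dglift_uniqueness_adjoints}. The only difference is that you spell out the verification of the setup hypotheses, which the paper leaves implicit in the discussion preceding Lemma \ref{lemma:dglift_geometric_existence}.
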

\begin{proof}
    Uniqueness of dg-lifts for $\mathbb R f_*$ follows directly from Theorem \ref{theorem:dglift_unbounded}. Uniqueness of dg-lifts for its adjoints $f^*$ and $f^\times$ follows directly from Lemma \ref{lemma:dglift_uniqueness_adjoints}.
\end{proof}

The functor $f^*$ is t-exact and its right adjoint $\mathbb R f_*$ is left t-exact (see also \cite[Proposition 2.2.7]{genovese-ramos-gabrielpopescu}). Hence, they directly restrict to functors
\begin{align*}
(f^*)^+ \colon \dercomp^+(\operatorname{QCoh}(Y)) & \to \dercomp^+(\operatorname{QCoh}(X)), \\
(\mathbb R f_*)^+ \colon \dercomp^+(\operatorname{QCoh}(X) & \to \dercomp^+(\operatorname{QCoh}(Y)).
\end{align*}
We also have dg-lifts
\begin{align*}
(\tilde{f}^*)^+ \colon \dercompdgplus(\operatorname{QCoh}(Y)) & \to \dercompdgplus(\operatorname{QCoh}(X)), \\
(\mathbb R \tilde{f}_*)^+ \colon \dercompdgplus(\operatorname{QCoh}(X) & \to \dercompdgplus(\operatorname{QCoh}(Y)).
\end{align*}
We can also prove a dg-lift uniqueness result for functors between bounded below derived categories:
\begin{theorem} \label{theorem:dglift_uniqueness_boundedbelow}
Let $X$ and $Y$ be  separated Noetherian schemes and let $f \colon X \to Y$ be a flat morphism. 

Let $F,G$ be quasi-functors between $\dercompdgplus(\operatorname{QCoh}(X))$ and $\dercompdgplus(\operatorname{QCoh}(Y))$ (or vice-versa) such that $H^0(F) \cong H^0(G)$ is isomorphic to one of the functors $(\mathbb R f_*)^+, (f^*)^+$. Then, $F \cong G$ as quasi-functors. In other words, dg-lifts of $(\mathbb R f_*)^+, (f^*)^+$ are unique.
\end{theorem}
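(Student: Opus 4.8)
The plan is to reduce the statement to Proposition \ref{proposition:dglift_bounded} for the pushforward $(\mathbb R f_*)^+$, and then to transport uniqueness to the pullback $(f^*)^+$ by adjunction, mirroring the proof of Theorem \ref{theorem:dglift_geometric_uniqueness} but carried out directly in the bounded below setting (rather than by restricting unbounded lifts as in Corollary \ref{corollary:dglift_bounded}). This way, an arbitrary dg-lift between the bounded below dg-categories is handled without assuming it arises as a restriction.

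First I would treat $(\mathbb R f_*)^+$. Set $\cat A = \dercompdgplus(\operatorname{QCoh}(X))$ and $\cat B = \dercompdgplus(\operatorname{QCoh}(Y))$, and check that the hypotheses of Proposition \ref{proposition:dglift_bounded} hold: both carry non-degenerate t-structures bounded from below (so that $\cat A^+ = \cat A$ and $\cat B^+ = \cat B$), whose hearts $\operatorname{QCoh}(X)$ and $\operatorname{QCoh}(Y)$ are Grothendieck abelian, hence have enough injectives; by Lemma \ref{lemma:dginj_dercatab} the derived injectives coincide with the ordinary injectives of the hearts, and the corresponding dg-subcategories of derived injectives are concentrated in degree $0$. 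It then remains to check that any dg-lift of $(\mathbb R f_*)^+$ admits a t-exact left adjoint. If $F$ is such a lift, so that $H^0(F) \cong (\mathbb R f_*)^+$, then flatness of $f$ makes $f^*$ exact, so $(f^*)^+$ is a t-exact left adjoint of $(\mathbb R f_*)^+ = H^0(F)$; by \cite[Lemma 2.1.3]{genovese-ramos-gabrielpopescu} the quasi-functor $F$ has a left adjoint $F_l$ with $H^0(F_l) \cong (f^*)^+$, which is therefore t-exact, and the same applies to a second lift $G$. Proposition \ref{proposition:dglift_bounded} then gives $F \cong G$, establishing uniqueness of dg-lifts of $(\mathbb R f_*)^+$.

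Next I would deduce uniqueness for $(f^*)^+$, which is the left adjoint of $(\mathbb R f_*)^+$, the latter being its right adjoint. Given two dg-lifts $F, G$ of $(f^*)^+$, \cite[Lemma 2.1.3]{genovese-ramos-gabrielpopescu} supplies right adjoints $F_r, G_r$ with $H^0(F_r) \cong H^0(G_r) \cong (\mathbb R f_*)^+$; by the previous step these are isomorphic, and the right-adjoint version of Lemma \ref{lemma:dglift_uniqueness_adjoints} forces $F \cong G$. This completes the argument for both functors.

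The only genuinely non-formal input beyond Proposition \ref{proposition:dglift_bounded} is the t-exactness of the left adjoint of the pushforward on the bounded below level, which is precisely where the flatness of $f$ enters through the exactness of $f^*$. Everything else is a formal manipulation of adjoint quasi-functors, so I do not anticipate a serious obstacle here: the substantive work has already been absorbed into Proposition \ref{proposition:dglift_bounded} and the correspondence result \cite[Theorem 1.4]{genovese-lowen-vdb-dginj} that underlies it, and—unlike the unbounded Theorem \ref{theorem:dglift_unbounded}—no delicate extension of quasi-functors from the bounded below to the unbounded homotopy category of injectives is required.
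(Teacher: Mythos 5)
Your proposal is correct and takes essentially the same route as the paper, whose entire proof of this theorem is the single line that it ``follows immediately from Corollary \ref{corollary:dglift_bounded}'' --- the substance of which is exactly your reduction: Proposition \ref{proposition:dglift_bounded} applied to lifts of $(\mathbb R f_*)^+$ (with t-exactness of the left adjoint supplied by flatness of $f$), and transfer to $(f^*)^+$ by the adjoint-uniqueness Lemma \ref{lemma:dglift_uniqueness_adjoints}. If anything, your write-up is more careful than the paper's, since Corollary \ref{corollary:dglift_bounded} as literally stated concerns the restrictions $F^+, G^+$ of unbounded quasi-functors from the setup of \S \ref{subsection:setup}, whereas you apply Proposition \ref{proposition:dglift_bounded} directly to arbitrary quasi-functors between the bounded below dg-categories and verify its hypotheses (non-degenerate bounded below t-structures, enough derived injectives concentrated in degree $0$ via Lemma \ref{lemma:dginj_dercatab}, and existence of t-exact left adjoints via \cite[Lemma 2.1.3]{genovese-ramos-gabrielpopescu}) --- precisely the details the word ``immediately'' elides.
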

\begin{proof}
    This follows immediately from Corollary \ref{corollary:dglift_bounded}.
\end{proof}

\subsubsection{Uniqueness of Fourier-Mukai kernels} \label{subsubsection:FMkernels}
It is well-known that the uniqueness problem of dg-lifts is essentially the same as the uniqueness problem of Fourier-Mukai kernels of triangulated functors between derived categories of schemes (cf. \cite{canonaco-stellari-dgenh-survey}). 

More precisely, we first recall \cite[Theorem 8.9]{toen-dgcat-invmath} which yields an isomorphism in $\kat{Hqe}$:
\begin{equation}
    \dercompdg(\operatorname{QCoh}(X \times Y)) \xrightarrow{\sim} \mathbb R\! \Hom_c(\dercompdg(\operatorname{QCoh}(X)), \dercompdg(\operatorname{QCoh}(Y)),
\end{equation}
where $\mathbb R \!\Hom_c$ denotes the dg-category of cocontinuous quasi-functors, namely, quasi-functors whose $H^0$ preserves small direct sums. Then, we use \cite[Theorem 1.1]{lunts-schnurer-newenhancements} which yields under suitable hypotheses a commutative diagram (up to isomorphism):
\begin{equation} \label{equation:comparison_dglift_fouriermukai}
\begin{tikzcd}[ampersand replacement=\&]
	{\dercomp(\operatorname{Qcoh}(X \times Y))} \& {H^0(\mathbb R\! \Hom_c(\dercompdg(\operatorname{QCoh}(X)), \dercompdg(\operatorname{QCoh}(Y)))} \\
	\& {\Fun(\dercomp(\operatorname{QCoh}(X)), \dercomp(\operatorname{QCoh}(Y))).}
	\arrow["\sim", from=1-1, to=1-2]
	\arrow["{\Phi^{X \to Y}_{-}}"', from=1-1, to=2-2]
	\arrow["{H^0(-)}", from=1-2, to=2-2]
\end{tikzcd}
\end{equation}
The functor $\Phi^{X \to Y}_{-}$ maps an element $\mathcal E \in \dercomp(\operatorname{Qcoh}(X \times Y))$ to the \emph{Fourier-Mukai functor} with kernel $\mathcal E$:
\[
\Phi^{X \to Y}_{\mathcal E}(-) = \mathbb R(p_2)_* (\mathcal E \lotimes p_1^*(-)),
\]
where $p_1 \colon X \times Y \to X$ and $p_2 \colon X \times Y \to Y$ are the natural projections. On the other hand, the vertical $H^0(-)$ functor maps a quasi-functor to its zeroth cohomology functor.
\begin{remark} \label{remark:dglift_fouriermukai_hypotheses}
The hypotheses which ensure the existence of the commutative diagram \eqref{equation:comparison_dglift_fouriermukai} are as follows: $X$ and $Y$ are Noetherian separated schemes, $X \times Y$ is Noetherian and both $X$ and $Y$ have the following property: any perfect complex is isomorphic to a strictly perfect complex.

We remark that those hypotheses are satisfied if both $X$ and $Y$ are quasi-projective.
\end{remark}

We can now immediately translate Theorem \ref{theorem:dglift_geometric_uniqueness} to a uniqueness result of Fourier-Mukai kernels:
\begin{corollary} \label{corollary:uniqueness_fouriermukai}
    Let $X$ and $Y$ be as in the above Remark \ref{remark:dglift_fouriermukai_hypotheses}, and let $f \colon X \to Y$ be a flat morphism. 
    
    Let $\mathcal E_1, \mathcal E_2 \in \dercomp(\operatorname{QCoh}(X \times Y)$ be such that
    \[
    \Phi^{X \to Y}_{\mathcal E_1} \cong \Phi^{X \to Y}_{\mathcal E_2} \cong \mathbb R f_*.
    \]
    Then, $\mathcal E_1 \cong \mathcal E_2$.

    Analogously, let $\mathcal E_1, \mathcal E_2 \in \dercomp(\operatorname{QCoh}(Y \times X)$ be such that
    \[
    \Phi^{Y \to X}_{\mathcal E_1} \cong \Phi^{Y \to X}_{\mathcal E_2} \cong f^*.
    \]
    Then, $\mathcal E_1 \cong \mathcal E_2$.

    In other words, both functors $\mathbb R f_*$ and $f^*$ admit unique Fourier-Mukai kernels.
\end{corollary}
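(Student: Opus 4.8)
The plan is to deduce the statement purely formally from Theorem \ref{theorem:dglift_geometric_uniqueness} by transporting it along the two comparison results recalled in this subsection. The first is Toën's isomorphism in $\Hqe$, which identifies $\dercompdg(\operatorname{QCoh}(X \times Y))$ with the dg-category $\RHom_c(\dercompdg(\operatorname{QCoh}(X)), \dercompdg(\operatorname{QCoh}(Y)))$ of cocontinuous quasi-functors; the second is the commutative diagram \eqref{equation:comparison_dglift_fouriermukai}, which, under the hypotheses of Remark \ref{remark:dglift_fouriermukai_hypotheses}, exhibits the Fourier-Mukai assignment $\Phi^{X \to Y}_{-}$ as the composite of (the $H^0$ of) Toën's equivalence with the functor $H^0(-)$ taking a quasi-functor to its zeroth cohomology. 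The upshot is that a kernel is precisely a cocontinuous quasi-functor, and that two kernels are isomorphic exactly when the corresponding quasi-functors are.

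First I would treat $\mathbb R f_*$. Given $\mathcal E_1, \mathcal E_2 \in \dercomp(\operatorname{QCoh}(X \times Y))$ with $\Phi^{X \to Y}_{\mathcal E_1} \cong \Phi^{X \to Y}_{\mathcal E_2} \cong \mathbb R f_*$, I apply the top equivalence of \eqref{equation:comparison_dglift_fouriermukai} to obtain cocontinuous quasi-functors $F_1, F_2 \colon \dercompdg(\operatorname{QCoh}(X)) \to \dercompdg(\operatorname{QCoh}(Y))$ attached to $\mathcal E_1, \mathcal E_2$. Commutativity of the diagram gives $H^0(F_i) \cong \Phi^{X \to Y}_{\mathcal E_i} \cong \mathbb R f_*$, so both $F_1$ and $F_2$ are dg-lifts of $\mathbb R f_*$. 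By Theorem \ref{theorem:dglift_geometric_uniqueness} such dg-lifts are unique, hence $F_1 \cong G_1$ — more precisely $F_1 \cong F_2$ as quasi-functors. Since the top arrow of \eqref{equation:comparison_dglift_fouriermukai} is an equivalence of categories, it reflects isomorphisms, and therefore $F_1 \cong F_2$ forces $\mathcal E_1 \cong \mathcal E_2$ in $\dercomp(\operatorname{QCoh}(X \times Y))$, which is the first claim.

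The case of $f^*$ is entirely symmetric: I repeat the argument with the roles of $X$ and $Y$ interchanged, working with kernels in $\dercomp(\operatorname{QCoh}(Y \times X))$ and the equivalence $\dercompdg(\operatorname{QCoh}(Y \times X)) \xrightarrow{\sim} \RHom_c(\dercompdg(\operatorname{QCoh}(Y)), \dercompdg(\operatorname{QCoh}(X)))$, invoking the uniqueness of dg-lifts of $f^*$ that is also provided by Theorem \ref{theorem:dglift_geometric_uniqueness}. There is no substantive obstacle here, since the corollary is a translation; the two points I would flag rather than belabor are, first, that $\mathbb R f_*$ and $f^*$ really are cocontinuous — so that they lie in the essential image of $H^0(-)$ and the comparison diagram applies to them — which holds because any morphism of Noetherian schemes is quasi-compact and quasi-separated (whence $\mathbb R f_*$ commutes with direct sums) and $f^*$ is a left adjoint; and second, that the return passage from an isomorphism of quasi-functors to an isomorphism of kernels genuinely uses that the top arrow of \eqref{equation:comparison_dglift_fouriermukai} is an equivalence, not merely essentially surjective. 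With these observations the corollary follows formally.
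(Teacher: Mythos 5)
Your proposal is correct and matches the paper's intended argument: the paper gives no separate proof of Corollary \ref{corollary:uniqueness_fouriermukai}, stating only that it is an immediate translation of Theorem \ref{theorem:dglift_geometric_uniqueness} through the commutative diagram \eqref{equation:comparison_dglift_fouriermukai}, which is exactly what you carry out (identify kernels with cocontinuous quasi-functors via To\"en's equivalence, use uniqueness of dg-lifts, and conclude via the fact that an equivalence reflects isomorphisms). The only blemish is the typo ``$F_1 \cong G_1$'' where you mean $F_1 \cong F_2$; your two flagged points are sound but not needed for the uniqueness statement, since the kernels are given by hypothesis.
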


\section{Extending natural isomorphisms} \label{section:extensionresult}

The goal of this section is to show that we can extend data (which will be, in our case, a natural isomorphism of quasi-functors) from $\compdgplus(\varcat I_{\mathfrak A})$ to the dg-category of unbounded complexes $\compdg(\varcat I_{\mathfrak A})$.

\subsection{The result}
We will work in a slightly greater generality than the setup in \S \ref{subsection:setup}. We fix a $\basering k$-linear category $\varcat I$ closed under countable direct sums, a pretriangulated dg-category $\cat B$ such that $H^0(\cat B)$ has countable direct sums, and moreover quasi-functors
\[
F, G \colon \compdg(\varcat I) \to \cat B.
\]
such that $H^0(F)$ and $H^0(G)$ preserve countable direct sums. We also set:
\[
F_0 = F \circ i', G_0 = G \circ i',
\]
where $i' \colon \compdgplus(\varcat I) \hookrightarrow \compdg(\varcat I)$ is the inclusion dg-functor.

We want to prove the following:
\begin{proposition} \label{proposition:extendiso}
Let $\varphi_0 \colon F_0 \xrightarrow{\sim} G_0$ be an isomorphism of quasi-functors. Then, $\varphi_0$ can be extended to an isomorphism of quasi-functors
\[
\varphi \colon F \xrightarrow{\sim} G.
\]
such that $\varphi \circ i' = \varphi_0$.
\end{proposition}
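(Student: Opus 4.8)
The plan is to exhibit every unbounded complex as a homotopy colimit of its brutal (stupid) truncations, to organize this presentation \emph{dg-functorially}, and then to transport $\varphi_0$ along it. For $X \in \compdg(\varcat I)$ let $\sigma_{\geq -n}X$ denote the brutal truncation retaining the terms in degrees $\geq -n$; each $\sigma_{\geq -n}X$ lies in $\compdgplus(\varcat I)$, the comparison maps $\sigma_{\geq 0}X \hookrightarrow \sigma_{\geq -1}X \hookrightarrow \cdots$ are termwise-split monomorphisms, and their strict colimit in $\compdg(\varcat I)$ is $X$ itself. Being degreewise split, the telescope of this system yields a termwise-split short exact sequence
\[
0 \to \bigoplus_n \sigma_{\geq -n}X \xrightarrow{1-s} \bigoplus_n \sigma_{\geq -n}X \to X \to 0,
\]
hence a distinguished triangle in $\hocomp(\varcat I)=H^0(\compdg(\varcat I))$ realizing $X \cong \hocolim_n \sigma_{\geq -n}X$. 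Since $H^0(F)$ and $H^0(G)$ preserve countable direct sums, and all quasi-functors preserve shifts and cones, applying $F$ and $G$ to this triangle presents $FX \cong \hocolim_n F\sigma_{\geq -n}X$ and $GX \cong \hocolim_n G\sigma_{\geq -n}X$.

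The \emph{objectwise} comparison is then immediate: the isomorphisms $\varphi_0(\sigma_{\geq -n}X)\colon F\sigma_{\geq -n}X \xrightarrow{\sim} G\sigma_{\geq -n}X$ are natural in $n$, commute with the telescope maps $1-s$, and hence — completing to a morphism of triangles and applying the five lemma — yield an isomorphism $FX \xrightarrow{\sim} GX$ for each $X$. The real content of the proposition, however, is that this data must be upgraded to a genuine \emph{morphism of quasi-functors}, i.e. to an element of $H^0\bigl(\RHom(\compdg(\varcat I),\cat B)\bigr)(F,G)$ restricting to $\varphi_0$; objectwise isomorphisms carry no naturality or higher-coherence information on their own. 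I would therefore phrase the task as a lifting problem for the restriction map
\[
\rho\colon H^0\bigl(\RHom(\compdg(\varcat I),\cat B)\bigr)(F,G) \longrightarrow H^0\bigl(\RHom(\compdgplus(\varcat I),\cat B)\bigr)(F_0,G_0),
\]
proving that $\varphi_0$ lies in the image of $\rho$, and that any such lift is automatically an isomorphism, the latter by the objectwise five-lemma argument just given.

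To solve the lifting problem I would first establish that the telescope presentation above is valid not merely objectwise but at the level of quasi-functors: that the identity quasi-functor of $\compdg(\varcat I)$ is the homotopy colimit of the brutal-truncation endofunctors, so that $F \cong \hocolim_n (F\circ \sigma_{\geq -n})$ and $G \cong \hocolim_n (G \circ \sigma_{\geq -n})$ as quasi-functors. This is the step where the unexpectedly good dg-functorial behaviour of brutal truncation must be exploited: although $\sigma_{\geq -n}$ is not literally a dg-functor on the whole Hom-complexes, the telescope is assembled from termwise direct sums, shifts and the single fixed map $1-s$, all of which \emph{are} dg-functorial, so the presentation should survive as an identity of quasi-functors. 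Granting this, closedness of $\Hqe$ turns $\rho$ into the map on $H^0$ of a homotopy limit, and I obtain a Milnor exact sequence
\[
0 \to \wlim{H^{-1}(C_n)}{1} \to H^0\bigl(\RHom(\compdg(\varcat I),\cat B)\bigr)(F,G) \to \varprojlim_n H^0(C_n) \to 0,
\]
where $C_n = \RHom(\compdg(\varcat I),\cat B)(F\circ\sigma_{\geq -n}, G)$ is the mapping complex over the $n$-th truncated stage. The compatible family furnished by $\varphi_0$, namely $F\sigma_{\geq -n} \xrightarrow{\varphi_0} G\sigma_{\geq -n} \to G$, defines an element of $\varprojlim_n H^0(C_n)$, whose only obstruction to lifting along $\rho$ is its image in the $\varprojlim^1$ term.

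The main obstacle is precisely making brutal truncation dg-functorial enough for the previous paragraph, together with the resulting vanishing of this $\varprojlim^1$ term. I expect to handle the latter through the Mittag--Leffler condition on the tower $(H^{-1}(C_n))_n$, whose transition maps are induced by the brutal-truncation inclusions $\sigma_{\geq -n} \to \sigma_{\geq -(n+1)}$; showing these maps are surjective on $H^{-1}$ is exactly where the fine structure of brutal truncation and the hypothesis that $\varphi_0$ is an isomorphism enter, and it forces $\wlim{H^{-1}(C_n)}{1}=0$. Once the obstruction vanishes, $\varphi_0$ lifts to a morphism $\varphi\colon F \to G$; restricting the telescope to bounded-below objects, where $\sigma_{\geq -n}X = X$ for $n \gg 0$, shows $\varphi\circ i' = \varphi_0$, and the objectwise five-lemma of the second paragraph shows $\varphi$ is an isomorphism, which completes the proof.
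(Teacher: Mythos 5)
You have correctly isolated the crux of the proposition --- that objectwise isomorphisms must be upgraded to a morphism of quasi-functors, and that the non-functoriality of brutal truncation is the obstacle --- but the two steps on which your argument rests both fail as stated. First, your claimed presentation $F \cong \hocolim_n (F \circ \sigma_{\geq -n})$ \emph{as quasi-functors} is exactly the hard point, and the justification you offer for it is incorrect: the assignment $X \mapsto \bigoplus_n \sigma_{\geq -n}X$ is \emph{not} dg-functorial, because a homogeneous morphism $f$ of degree $p$ truncates to a map $\sigma_{\geq -n}X \to \sigma_{\geq -(n-p)}Y$ that shifts the truncation window and commutes with differentials only up to a boundary correction (cf.\ the identity \eqref{equation:identity_differentials_truncations} in the paper); this failure is not cancelled by taking direct sums at any finite stage, nor by the telescope. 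The paper's Proposition \ref{proposition:dgfunct_extend_limit} shows that strict dg-functoriality is recovered only after passing to the (inverse) limit over \emph{all} truncations. Second, and more fatally, the colimit side cannot be controlled at the bimodule level at all: $F$ is a dg-functor $\compdg(\varcat I) \to \dgm(\cat B)$ with quasi-representable values, and Yoneda commutes with products but not with coproducts. From $H^0(F)$ preserving countable direct sums you cannot conclude that the telescope comparison $\mathrm{hocolim}_n\, F(\sigma_{\geq -n}X) \to F(X)$ is a quasi-isomorphism in $\dgm(\cat B)$: evaluating at a non-compact object $B \in \cat B$, the natural map $\varinjlim_n H^{*}\cat B(B,\Phi_F(\sigma_{\geq -n}X)) \to H^{*}\cat B(B,\Phi_F(X))$ is not an isomorphism in general. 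Consequently neither your presentation of $F$ nor the identification of $\RHom(\compdg(\varcat I),\cat B)(F,G)$ with $\holim_n C_n$ can be established this way. This is precisely why the paper dualizes (\S \ref{subsubsection:opposite_qfun}): passing to $\opp{F}, \opp{G} \colon \compdg(\opp{\varcat I}) \to \dgm(\opp{\cat B})$ turns the direct sums into products, which representables \emph{do} preserve, and then the strict inverse limit $\varprojlim_n F_0(X_{\leq n})$ along the split epimorphisms $F(X_{\leq n+1}) \to F(X_{\leq n})$ is an honest homotopy limit (Proposition \ref{proposition:bimod_preserve_limit_qis}) as well as strictly dg-functorial in $X$. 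Your colimit telescope is the $H^0$-level shadow of the paper's limit tower, but the bimodule-level argument only exists on the product side.

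There is also an internal flaw in your obstruction analysis: in the Milnor sequence $0 \to \wlim{H^{-1}(C_n)}{1} \to H^0(\RHom(\compdg(\varcat I),\cat B))(F,G) \to \varprojlim_n H^0(C_n) \to 0$ the right-hand map is \emph{surjective}, so a compatible family always lifts; the $\varprojlim^1$ term measures non-uniqueness of the lift, not an obstruction to existence. Your Mittag--Leffler discussion therefore solves a non-problem, while leaving open the genuine residual issue, namely that agreement of the lift with $\varphi_0$ in $\varprojlim_n H^0(C_n)$ controls only the truncated stages and does not by itself give $\varphi \circ i' = \varphi_0$ in $H^0(\RHom(\compdgplus(\varcat I),\cat B))(F_0,G_0)$. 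The paper sidesteps all mapping-complex obstruction theory: it represents $\varphi_0$ by a strict zig-zag of quasi-isomorphisms of dg-bimodules $F_0 \xleftarrow{\sim} H_0 \xrightarrow{\sim} G_0$, extends $H_0$ and both legs \emph{strictly} by the dg-functorial limit formula, and deduces that the extended legs remain quasi-isomorphisms because the strict limits are homotopy limits (Lemma \ref{lemma:extending_qis_unbounded}). Your objectwise five-lemma observation and the remark that $\sigma_{\geq -n}X = X$ for $n \gg 0$ on bounded-below complexes are correct, and appear (dually) in the paper, but the core of your argument as written does not go through.
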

\begin{remark}
    If $F$ and $G$ are left adjoint quasi-functors (which is equivalent to requiring that $H^0(F)$ and $H^0(G)$ are left adjoints, see \cite[Lemma 2.1.3]{genovese-ramos-gabrielpopescu}) then clearly $H^0(F)$ and $H^0(G)$ preserve direct sums.
\end{remark}
We recall that an isomorphism of quasi-functors $F_0 \xrightarrow{\sim} G_0$ can be described as a zig-zag of quasi-isomorphisms of dg-bimodules:
\[
F_0 \xleftarrow{\sim} H_0 \xrightarrow{\sim} G_0.
\]
Hence, our task is to find a similar zig-zag of quasi-isomorphisms
\[
F \xleftarrow{\sim} H \xrightarrow{\sim} G
\]
for a suitable $H$. This will need some technical efforts and will be dealt with in steps, in the following parts of this subsection.

\subsection{Brutal truncations} \label{subsection_brutaltruncation}
If $X^\bullet$ is an object in $\compdg(\varcat I)$, we can define its \emph{brutal truncations} $X^\bullet_{\leq n}$ and $X^\bullet_{\geq n}$ (for $n \in \mathbb Z$) simply by:
\begin{align*}
    X_{\leq n} &= \quad \cdots \to X^{n-1} \to X^n \to 0 \to 0 \to \cdots, \\
    X_{\geq n} &= \quad \cdots \to 0 \to 0 \to X^n \to X^{n+1} \to \cdots 
\end{align*}

There are obvious ``projection'' and ``inclusion'' degree $0$ morphisms:
\begin{equation} \label{equation:projections_inclusions_systems}
\begin{split}
 p_{n+1,n} \colon X^\bullet_{\leq n+1} \to X^\bullet_{\leq n},& \qquad i_{n,n+1} \colon  X^\bullet_{\leq n} \to X^\bullet_{\leq n+1}, \\
 s_{-n-1,-n} \colon X^\bullet_{\geq -n-1} \to X^\bullet_{\geq -n},& \qquad j_{-n,-n-1} \colon X^\bullet_{\geq -n} \to X^\bullet_{\geq -n-1}.
 \end{split}
\end{equation}
$p_{n+1,n}$ and $j_{-n,-n-1}$ are closed, but $s_{-n-1,-n}$ and $i_{n,n+1}$ are (in general) not. We also have closed degree $0$ morphisms
\begin{equation} \label{equation:brutaltrunc_proj_inj}
p_n \colon X^\bullet \to X^\bullet_{\leq n}, \qquad j_{-n} \colon X^\bullet_{\geq -n} \to X^\bullet.
\end{equation}
We can easily check that $X^\bullet$ with the $p_n$ is the limit of the system $(p_{n+1,n})_n$ and that itself with the $j_{-n}$ is the colimit of the system $(j_{-n,-n+1})_n$ (even restricting to $n \in \mathbb N$):
\begin{equation} \label{equation:complex_colimit_brutal}
X^\bullet \cong \varprojlim_{n \geq 0} X_{\leq n}, \qquad X^\bullet \cong \varinjlim_{n \geq 0} X_{\geq -n}.
\end{equation}
For more details in the more general setting of twisted complexes, see \cite[\S 2]{genovese-twisted-unbounded}.

We remark that brutal truncations are \emph{not} functorial. If $f \colon X^\bullet \to Y^\bullet$ is any morphism in $\compdg(\varcat I)$, we can define morphisms $f_{\leq n} \colon X^\bullet_{\leq n} \to Y^\bullet_{\leq n}$ and $f_{\geq n} \colon X^\bullet_{\geq n} \to X^\bullet_{\geq n}$ in the obvious way, but the mappings $f \mapsto f_{\leq n}$ and $f \mapsto f_{\geq n}$ will not be well-behaved with respect to compositions and differentials. An exception to this is achieved when we restrict to closed degree $0$ morphisms, see also \cite[Remark 2.8]{genovese-twisted-unbounded}.

\subsection{Extending dg-functors to unbounded complexes}
The formulas \eqref{equation:complex_colimit_brutal} hint that the lost dg-functoriality of brutal truncations might be recovered ``to the limit''. This is the key idea behind the following result, which allows us to extend dg-functors defined on $\compdgplus(\varcat I)$ to the dg-category of unbounded complexes $\compdg(\varcat I)$. For technical reasons which we be clearer later on, we write down a dual result involving $\compdgminus(\varcat P)$, where $\varcat P = \opp{\varcat I}$.

\begin{proposition} \label{proposition:dgfunct_extend_limit}
Let $\cat D$ be a dg-category having strictly dg-functorial sequential limits (of sequences of closed degree $0$ morphisms), and let
\[
F_0 \colon \compdgminus(\varcat P) \to \cat D
\]
be a dg-functor. Then, there exists a dg-functor
\[
F \colon \compdg(\varcat P) \to \cat D
\]
defined on objects by
\[
F(X^\bullet) = \varprojlim_{n \geq 0} F_0(X^\bullet_{\leq n})
\]
which extends $F$.

We shall sometimes denote such dg-functor $F$ as
\begin{equation} \label{equation:dgfunct_unbounded_extension_def}
\varprojlim_{n \geq 0} F_0(-_{\leq n}).
\end{equation}
\end{proposition}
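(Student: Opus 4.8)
The plan is to define $F$ on objects by the displayed formula and then to build the hom-complex maps by routing everything through the already-given dg-functor $F_0$ on bounded-above complexes, letting the universal properties of the (strictly dg-functorial) limits in $\cat D$ absorb the failure of functoriality of the brutal truncations. On objects I set $F(X^\bullet) = \varprojlim_{n\geq 0} F_0(X^\bullet_{\leq n})$. This makes sense precisely because the projections $p_{n+1,n}$ of \eqref{equation:projections_inclusions_systems} are \emph{closed} degree $0$ morphisms, so the $F_0(X^\bullet_{\leq n})$ assemble into a genuine tower in $\cat D$ along the closed maps $F_0(p_{n+1,n})$, and $\cat D$ has limits of such towers. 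I write $\pi_m \colon F(X^\bullet) \to F_0(X^\bullet_{\leq m})$ for the structure maps; they satisfy $\pi_m = F_0(p_{m+1,m}) \circ \pi_{m+1}$.

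For the hom-complexes I would first use continuity of $\cat D(-,-)$ and of $\compdg(\varcat P)(X^\bullet,-)$ in the second variable (a consequence of strict dg-functoriality of the limits together with \eqref{equation:complex_colimit_brutal}) to reduce, for fixed $X^\bullet, Y^\bullet$, to constructing for each target level $n$ a chain map
\[
\compdg(\varcat P)(X^\bullet, Y^\bullet_{\leq n}) \to \cat D(F(X^\bullet), F_0(Y^\bullet_{\leq n}))
\]
compatible with the tower in $n$; assembling these via $\varprojlim_n$ and precomposing with postcomposition-by-$p^Y_n$ then produces the desired $F_{X^\bullet, Y^\bullet}$, landing in $\cat D(F(X^\bullet), \varprojlim_n F_0(Y^\bullet_{\leq n})) = \cat D(F(X^\bullet), F(Y^\bullet))$. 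The observation that unlocks this is that a homogeneous degree $p$ morphism $X^\bullet \to Y^\bullet_{\leq n}$ has vanishing components in all degrees above $n-p$, hence factors through the projection $X^\bullet \to X^\bullet_{\leq m}$ for every $m \geq n-p$. Concretely this identifies
\[
\compdg(\varcat P)(X^\bullet, Y^\bullet_{\leq n}) = \varinjlim_m \compdg(\varcat P)(X^\bullet_{\leq m}, Y^\bullet_{\leq n}),
\]
the colimit being taken along precomposition with the \emph{closed} projections $p^X_{m+1,m}$. On this colimit I can finally apply the honest hom-complex map of the dg-functor $F_0$ (both arguments now being bounded above) and postcompose with $(\pi_m)^*$; the relation $\pi_m = F_0(p^X_{m+1,m}) \circ \pi_{m+1}$ is exactly what makes this compatible with the colimit transition maps, so the construction descends to a well-defined chain map.

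It then remains to verify that $F_{X^\bullet, Y^\bullet}$ is a chain map, that $F$ preserves identities and composition, and that $F$ restricts to $F_0$ along the inclusion $\compdgminus(\varcat P) \hookrightarrow \compdg(\varcat P)$. The restriction statement is immediate: if $X^\bullet$ is bounded above then $X^\bullet_{\leq n} = X^\bullet$ for $n \gg 0$, the defining tower is eventually constant with identity transition maps, and both the object- and morphism-level formulas collapse to $F_0$. Compatibility with differentials is built in, since every constituent of $F_{X^\bullet, Y^\bullet}$ (postcomposition by the closed map $p^Y_n$, the colimit identification, the hom-complex map of $F_0$, and $(\pi_m)^*$) is a chain map. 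Functoriality on composition follows formally from functoriality of $F_0$ and the universal properties of the limits $F(X^\bullet)$ and $F(Y^\bullet)$, reading off both sides through the projections $\pi_m$ and the tower maps of $F(Y^\bullet)$.

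The hard part will be this last bookkeeping: the truncation level $m \geq n-p$ at which a morphism becomes visible depends on its degree $p$, so the naive assignment $f \mapsto F_0(f_{\leq n})$ is genuinely non-functorial, as warned in \S\ref{subsection_brutaltruncation}. The whole point of the colimit presentation above is to package this degree-dependence so that only the \emph{closed} projections $p^X_{m+1,m}$, $p^Y_{n+1,n}$ — which \emph{are} functorial after applying $F_0$ — enter the definition, and the universal properties then glue the pieces into a single honest dg-functor. I expect verifying that these non-functorial brutal truncations do reassemble coherently, rather than any individual computation, to be the main obstacle; everything else is formal manipulation of limits and colimits of hom-complexes.
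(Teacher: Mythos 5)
Your proposal is correct, and it produces the same dg-functor as the paper (unwinding your definition, the level-$n$ projection of $F_{X^\bullet,Y^\bullet}(f)$ is $F_0(f_{n-p})$ precomposed with the structure map $\pi_{n-p}$, which is exactly the paper's defining diagram \eqref{equation:extension_dgfunct_definition} up to re-indexing), but your verification that it is a dg-functor follows a genuinely different route. The paper attacks the non-functoriality of brutal truncation head-on and component-wise: it defines $f_n$ via \eqref{equation:truncation_degreep_components}, proves the tower compatibility \eqref{equation:truncations_directsystem}, and then, for compatibility with differentials, computes that $d(f_n)$ and $(df)_n$ differ precisely in the component $i=n$ and that this discrepancy is annihilated by the next tower projection, which is the identity \eqref{equation:identity_differentials_truncations}; composition is likewise checked on components. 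You instead introduce a structural lemma that is not in the paper: the identification $\compdg(\varcat P)(X^\bullet,Y^\bullet_{\leq n}) \cong \varinjlim_m \compdg(\varcat P)(X^\bullet_{\leq m},Y^\bullet_{\leq n})$ along precomposition with the closed projections $p^X_{m+1,m}$, valid because a degree-$p$ morphism into $Y^\bullet_{\leq n}$ has no nonzero components above degree $n-p$ and the system stabilizes degreewise. With this in hand, every constituent of your hom-complex map is a chain map (postcomposition by the closed $p^Y_n$, the colimit isomorphism, the hom-map of the dg-functor $F_0$, and $(\pi_m)^*$), so compatibility with differentials --- the paper's main computation --- comes for free, while composition and unitality reduce to universal properties together with the fact that the $p^X_m$ are degreewise epimorphisms, so factorizations through them are unique. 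What the paper's computation buys is a self-contained elementary argument that localizes exactly where truncation fails to be dg-functorial; what yours buys is conceptual economy --- only closed morphisms are ever truncated, so no correction term ever appears --- at the cost of two checks you leave implicit but which do hold routinely: that the colimit identification commutes with differentials, and that your level-$n$ maps are compatible with the tower maps $F_0(p^Y_{n+1,n})_*$, so that they assemble into the limit $\cat D(F(X^\bullet),F(Y^\bullet)) = \varprojlim_n \cat D(F(X^\bullet),F_0(Y^\bullet_{\leq n}))$. I see no gap.
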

\begin{proof}
We need to define $F$ on morphisms and check that it is indeed a dg-functor. This will need some care, because brutal truncations are not by themselves functorial. Fortunately, taking limits resolves this issue.

Let $f \colon X^\bullet \to Y^\bullet$ be a degree $p$ morphism in $\compdg(\varcat P)$. We may view it as a degree $0$ morphism $f \colon X^\bullet \to Y^\bullet[p]$. Brutal truncations are a bit nicer when applied to degree $0$ morphisms: for $n \in \mathbb Z$ we obtain a degree $0$ morphism
\[
f_n \colon X^\bullet_{\leq n} \to (Y^\bullet[p])_{\leq n} = Y^\bullet_{\leq n+p}[p],
\]
which we may view as a degree $p$ morphism $f_n \colon X^\bullet_{\leq n} \to Y^\bullet_{\leq n+p}$. The components
\[
(f_n)_i^{i+p} \colon X^i \to Y^{i+p}
\]
of $f_n$ are easily described as follows: 
\begin{equation} \label{equation:truncation_degreep_components}
(f_n)_i^{i+p} = f_i^{i+p} \ \ \text{if $i\leq n$}, \qquad (f_n)_i^{i+p} = 0 \ \ \text{if $i>n$}.
\end{equation}

The morphisms $f_n$ are compatible with the directed system
\[
(p^X_{n+1,n} \colon X^\bullet_{\leq n+1} \to X^\bullet_{\leq n})_n
\]
and the ``shifted'' directed system
\[
(p^Y_{n+p+1,n+p} \colon Y^\bullet_{\leq n+p+1} \to Y^\bullet_{\leq n+p})_n
\]
(cf. \eqref{equation:projections_inclusions_systems}). Namely, the following diagram is commutative:
\begin{equation} \label{equation:truncations_directsystem}
\begin{tikzcd}[ampersand replacement=\&]
	{X^\bullet_{\leq n+1}} \& {Y^\bullet_{\leq n+p+1}} \\
	{X^\bullet_{\leq n}} \& {Y^\bullet_{\leq n+p}}
	\arrow["{f_{n+1}}", from=1-1, to=1-2]
	\arrow["{p^X_{n+1,n}}"', from=1-1, to=2-1]
	\arrow["{f_n}"', from=2-1, to=2-2]
	\arrow["{p^Y_{n+p+1,n+p}}", from=1-2, to=2-2]
\end{tikzcd}
\end{equation}
for all $n \in \mathbb Z$. This can be checked directly using \eqref{equation:truncation_degreep_components} or by identifying $f_n$ with the degree $0$ morphism $X^\bullet_{\leq n} \to (Y^\bullet[p])_{\leq n}$.

Next, we may define $F(f)$ essentially as
\[
F(f) = \varprojlim_{n \geq 0} F_0(f_n).
\]
More precisely, $F(f)$ is the unique degree $p$ morphism which makes the following diagram commute for all $n \geq 0$:
\begin{equation} \label{equation:extension_dgfunct_definition}
\begin{tikzcd}[ampersand replacement=\&]
	{F(X^\bullet)} \& {F(Y^\bullet)} \\
	{F_0(X^\bullet_{\leq n})} \& {F_0(Y^\bullet_{\leq n+p}),}
	\arrow["{F(f)}", from=1-1, to=1-2]
	\arrow["{\operatorname{pr}^{F,X}_n}"', from=1-1, to=2-1]
	\arrow["{F_0(f_n)}"', from=2-1, to=2-2]
	\arrow["{\operatorname{pr}^{F,Y}_{n+p}}", from=1-2, to=2-2]
\end{tikzcd}
\end{equation}
where we abused notation a little and identified
\[
F(Y^\bullet) = \varprojlim_{n \geq 0} F_0(Y^\bullet_{\leq n}) = \varprojlim_{n \geq 0} F_0(Y^\bullet_{\leq n+p}),
\]
together with the suitable projection morphisms $\operatorname{pr}^{F,Y}_n$ and $\operatorname{pr}^{F,Y}_{n+p}$.

We now go on to check that $F$ is indeed dg-functorial. $\basering k$-linearity of $f \mapsto F(f)$ is clear and comes from the obvious $\basering k$-linearity of $f \mapsto f_n$.

Compatibility with differentials is a bit trickier. Let $f \colon X^\bullet \to Y^\bullet$ be a degree $p$ morphism in $\compdg(\varcat P)$. First, we see that the commutative diagram \eqref{equation:truncations_directsystem} induces (for all $n \in \mathbb Z$) the following commutative diagram by taking differentials:
\begin{equation} \label{equation:truncations_directsystem_differentials}
\begin{tikzcd}[ampersand replacement=\&]
	{X^\bullet_{\leq n+1}} \& {Y^\bullet_{\leq n+p+1}} \\
	{X^\bullet_{\leq n}} \& {Y^\bullet_{\leq n+p}}
	\arrow["{d(f_{n+1})}", from=1-1, to=1-2]
	\arrow["{p^X_{n+1,n}}"', from=1-1, to=2-1]
	\arrow["{d(f_n)}"', from=2-1, to=2-2]
	\arrow["{p^Y_{n+p+1,n+p}}", from=1-2, to=2-2]
\end{tikzcd}
\end{equation}
We now compare the degree $p+1$ morphisms $d(f_n)$ and $(df)_n$. We compute components (here $i \in \mathbb Z$):
\begin{align*}
    d(f_n)_i^{i+p+1} &= (d_{Y^\bullet_{\leq n+p}})_{i+p}^{i+p+1} (f_n)_i^{i+p} - (-1)^p (f_n)_{i+1}^{i+p+1} (d_{X^\bullet_{\leq n}})_i^{i+1}, \\
    ((df)_n)_i^{i+p+1} &= (d_{Y^\bullet})_{i+p}^{i+p+1} f_i^{i+p} - (-1)^p f_{i+1}^{i+p+1} (d_{X^\bullet})_i^{i+1} \text{\ if $i\leq n$}, \quad ((df)_n)_i^{i+p+1}=0 \text{\ if $i > n$}.
\end{align*}
We see that for $i<n$ and $i>n$ the two above expressions are the same. For $i=n$, the first expression is $0$ whereas the second one is not. From this, we easily see that we have the identity
\begin{equation} \label{equation:identity_differentials_truncations}
d(f_n) = p^Y_{n+p+1,n+p} \circ (df)_n.
\end{equation}
Taking differentials in \eqref{equation:extension_dgfunct_definition} and using dg-functoriality of $F_0$, we see that the morphism $dF(f)$ is the unique one which makes the following diagram commute for all $n$:
\begin{equation*}
    \begin{tikzcd}[ampersand replacement=\&]
	{F(X^\bullet)} \& {F(Y^\bullet)} \\
	{F_0(X^\bullet_{\leq n})} \& {F_0(Y^\bullet_{\leq n+p}).}
	\arrow["{dF(f)}", from=1-1, to=1-2]
	\arrow["{\operatorname{pr}^{F,X}_n}"', from=1-1, to=2-1]
	\arrow["{F_0(d(f_n))}"', from=2-1, to=2-2]
	\arrow["{\operatorname{pr}^{F,Y}_{n+p}}", from=1-2, to=2-2]
\end{tikzcd}
\end{equation*}
On the other hand, $F(df)$ is the unique morphism which makes the following diagram commute for all $n \geq 0$:
\[
\begin{tikzcd}[column sep=5em, ampersand replacement=\&]
	{F(X^\bullet)} \& {F(Y^\bullet)} \\
	{F_0(X^\bullet_{\leq n})} \& {F_0(Y^\bullet_{\leq n+p+1})} \\
	{F_0(X^\bullet_{\leq n})} \& {F_0(Y^\bullet_{\leq n+p}).}
	\arrow["{F(df)}", from=1-1, to=1-2]
	\arrow["{\operatorname{pr}^{F,X}_n}"', from=1-1, to=2-1]
	\arrow["{\operatorname{pr}^{F,Y}_{n+p+1}}"', from=1-2, to=2-2]
	\arrow[equal, from=2-1, to=3-1]
	\arrow["{F_0(p_{n+p+1,n+p}^Y)}"', from=2-2, to=3-2]
	\arrow["{F_0(p_{n+p+1,n+p}^Y \circ (df)_n)}"', from=3-1, to=3-2]
	\arrow["{F_0((df)_n)}", from=2-1, to=2-2]
	\arrow["{\operatorname{pr}^{F,Y}_{n+p}}", shift left=5, curve={height=-18pt}, from=1-2, to=3-2]
\end{tikzcd}
\]
Thanks to the identity \eqref{equation:identity_differentials_truncations}, we finally conclude that $F(df)=dF(f)$.

To finish the proof, we check compatibility with compositions and identities. Let $f \colon X^\bullet \to Y^\bullet$ be a degree $p$ morphism and let $g \colon Y^\bullet \to Z^\bullet$ be a degree $q$ morphism. Consider the following diagram ($n \in \mathbb N$):
\[
\begin{tikzcd}[ampersand replacement=\&]
	{F(X^\bullet)} \& {F(Y^\bullet)} \& {F(Z^\bullet)} \\
	{F_0(X^\bullet_{\leq n})} \& {F_0(Y^\bullet_{\leq n+p})} \& {F_0(Z^\bullet_{\leq n+p+q}).} \\
	{}
	\arrow["{F(f)}", from=1-1, to=1-2]
	\arrow["{F(g)}", from=1-2, to=1-3]
	\arrow["{\operatorname{pr}^{F,X}_n}"', from=1-1, to=2-1]
	\arrow["{F_0(f_n)}"', from=2-1, to=2-2]
	\arrow["{F_0(g_{n+p})}"', from=2-2, to=2-3]
	\arrow["{\operatorname{pr}^{F,Z}_{n+p+q}}"', from=1-3, to=2-3]
	\arrow["{\operatorname{pr}^{F,Y}_{n+p}}"', from=1-2, to=2-2]
	\arrow["{F(gf)}", shift left=1, curve={height=-24pt}, from=1-1, to=1-3]
	\arrow["{F_0((gf)_n)}"', shift right=1, curve={height=24pt}, from=2-1, to=2-3]
\end{tikzcd}
\]
Thanks to the universal property of the directed limits defining $F$, we conclude that $F(gf)=F(g)F(f)$ once we show that
\[
(gf)_n = g_{n+p} f_n \colon X^\bullet_{\leq n} \to Z^\bullet_{\leq n+p+q}. 
\]
This is proved by a direct inspection, recalling \eqref{equation:truncation_degreep_components}.

Compatibility with identities is shown in a similar way once we see that
\[
(1_{X^\bullet})_n = 1_{X^\bullet_{\leq n}}
\]
for all $n$. The proof that $F$ is indeed a dg-functor is complete.

The last thing to check is that $F$ is actually an extension of $F_0$. If $X \in \compdgminus(\varcat P)$, then $X=X_{\leq M}$ for $M \gg 0$. Hence, the directed system $(X_{\leq n+1} \to X_{\leq n})_n$ is definitely constant, and it remains so after applying $F_0$. Hence, we have:
\[
F(X) = \varprojlim_n F_0(X_{\leq n}) \cong F_0(X),
\]
and a direct inspection using the definition of $F$ shows that this isomorphism is natural in $X \in \compdgminus(\varcat P)$.
\end{proof}

We now consider dg-functors $\compdg(\varcat P) \to \dgm(\cat B)$ (where $\cat B$ is any dg-category), which are just $\compdg(\varcat P)$-$\cat B$-dg-bimodules. We prove that, if they preserve the suitable (homotopy) colimits, they can be reconstructed as extensions of the form \eqref{equation:dgfunct_unbounded_extension_def}.
\begin{lemma} \label{lemma:dgfunct_iso_limit_extension}
Let $\cat B$ be a dg-category, and let
\[
F \colon \compdg(\varcat P) \to \dgm(\cat B)
\]
be a dg-functor. We assume that, for any $X^\bullet \in \compdg(\varcat P)$, the natural morphism
\begin{equation}
F(X^\bullet) \to \varprojlim_{n \geq 0} F(X^\bullet_{\leq n}),
\end{equation}
induced by the maps $F(p_n^X) \colon F(X^\bullet) \to F(X^\bullet_{\leq n})$ (cf. \eqref{equation:brutaltrunc_proj_inj}), is a quasi-isomorphism in $\dgm(\varcat B)$.

Let $j \colon \compdgminus(\varcat P) \hookrightarrow \compdg(\varcat P)$ be the inclusion, and denote
\[
F_0 = F \circ j.
\]
Then, there is a quasi-isomorphism (of $\compdg(\varcat P)$-$\cat B$-dg-bimodules):
\begin{equation}
    F \xrightarrow{\sim} \varprojlim_{n \geq 0} F_0(-_{\leq n}),
\end{equation}
where $\varprojlim_{n \geq 0} F_0(-_{\leq n})$ is the extension of $F_0$ discussed in Proposition \ref{proposition:dgfunct_extend_limit}.
\end{lemma}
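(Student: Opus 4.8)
The plan is to realise the asserted map as the natural comparison morphism $\eta \colon F \to \bar F$ into the reconstructed bimodule $\bar F := \varprojlim_{n \geq 0} F_0(-_{\leq n})$, to verify that it is a genuine morphism of dg-bimodules, and then to deduce that it is a quasi-isomorphism directly from the hypothesis. First I would observe that the target $\dgm(\cat B)$ has strictly dg-functorial sequential limits: limits of dg-modules are formed componentwise in $\dgm(\basering k)$ and are preserved by the hom-complexes. Hence Proposition \ref{proposition:dgfunct_extend_limit} applies to $F_0 = F \circ j$ and yields the dg-functor $\bar F$, with $\bar F(X^\bullet) = \varprojlim_{n \geq 0} F(X^\bullet_{\leq n})$ on objects. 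I write $\pr^X_n \colon \bar F(X^\bullet) \to F(X^\bullet_{\leq n})$ for the structure projections.

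For each $X^\bullet$ I would define $\eta_{X^\bullet} \colon F(X^\bullet) \to \bar F(X^\bullet)$ as the unique closed degree $0$ morphism characterised by $\pr^X_n \circ \eta_{X^\bullet} = F(p_n^X)$ for all $n \geq 0$; this is precisely the comparison morphism appearing in the hypothesis. That $\eta_{X^\bullet}$ is closed is automatic: since the $\pr^X_n$ are closed, $\pr^X_n \circ d(\eta_{X^\bullet}) = d(F(p_n^X)) = F(d p_n^X) = 0$, because each $p_n^X$ is closed, and the projections are jointly monic.

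The core step is to check that $\{\eta_{X^\bullet}\}$ is a graded natural transformation, hence a closed degree $0$ morphism of $\compdg(\varcat P)$-$\cat B$-dg-bimodules. Given a homogeneous $f \colon X^\bullet \to Y^\bullet$ of degree $p$, I must establish $\bar F(f) \circ \eta_{X^\bullet} = \eta_{Y^\bullet} \circ F(f)$. As $\bar F(Y^\bullet)$ is a limit, it suffices to compare the two composites after applying each $\pr^Y_{n+p}$. Using the characterisation \eqref{equation:extension_dgfunct_definition} of $\bar F(f)$ (and noting $F_0(f_n) = F(f_n)$, since $f_n$ is bounded above) together with the defining relation of $\eta$, the left-hand side becomes $F(f_n \circ p_n^X)$ and the right-hand side becomes $F(p_{n+p}^Y \circ f)$. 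Everything therefore reduces to the single identity of degree $p$ morphisms
\[
f_n \circ p_n^X = p_{n+p}^Y \circ f \colon X^\bullet \to Y^\bullet_{\leq n+p},
\]
which I would verify by direct inspection of components in the spirit of \eqref{equation:truncation_degreep_components}: in degree $i \leq n$ both sides equal $f_i^{i+p}$, while for $i > n$ both vanish, for every $p \in \mathbb Z$. Joint monicity of the projections over the cofinal family of indices $n+p$ then delivers the naturality square.

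Finally, a morphism of dg-bimodules is a quasi-isomorphism exactly when it is objectwise a quasi-isomorphism, and by hypothesis each $\eta_{X^\bullet} \colon F(X^\bullet) \to \varprojlim_{n} F(X^\bullet_{\leq n})$ is a quasi-isomorphism in $\dgm(\cat B)$; hence $\eta \colon F \xrightarrow{\sim} \bar F$ is the desired quasi-isomorphism. I expect the only genuinely delicate point to be the naturality check for morphisms of nonzero degree, where brutal truncations are not functorial: this is exactly where the index shift $n \mapsto n+p$ and the component identity above must be handled with care, paralleling the compatibility-with-composition argument in the proof of Proposition \ref{proposition:dgfunct_extend_limit}.
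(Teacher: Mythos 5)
Your proposal is correct and follows essentially the same route as the paper's own proof: both realise the quasi-isomorphism as the comparison morphism determined by the maps $F(p_n^X)$, reduce its naturality (via the universal property of the limit and the characterisation \eqref{equation:extension_dgfunct_definition} of $\varprojlim_{n\geq 0} F_0(-_{\leq n})$ on morphisms) to the single component identity $f_n \circ p_n^X = p_{n+p}^Y \circ f$, verified degreewise exactly as you do, and then conclude from the objectwise hypothesis. The only difference is that you make explicit some routine points the paper leaves implicit (closedness of the comparison map, strict dg-functoriality of sequential limits in $\dgm(\cat B)$), which is harmless.
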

\begin{proof}
We only need to check that, for any given $f \colon X^\bullet \to Y^\bullet$ of degree $p$, the following diagram is commutative:
\[
\begin{tikzcd}[ampersand replacement=\&]
	{F(X^\bullet)} \& {\varprojlim_{n \geq 0} F_0(X^\bullet_{\leq n})} \\
	{F(Y^\bullet)} \& {\varprojlim_{n \geq 0} F_0(Y^\bullet_{\leq n}).}
	\arrow["\sim", from=1-1, to=1-2]
	\arrow["{F(f)}"', from=1-1, to=2-1]
	\arrow["\sim"', from=2-1, to=2-2]
	\arrow["{\varprojlim_{n \geq 0} F_0(f_n)}", from=1-2, to=2-2]
\end{tikzcd}
\]
Recalling the definition of $\varprojlim_{n \geq 0} F_0(-_{\leq n})$ (see \eqref{equation:extension_dgfunct_definition}), this is equivalent to the commutativity of
\[
\begin{tikzcd}[ampersand replacement=\&]
	{F(X^\bullet)} \& {F_0(X^\bullet_{\leq n})} \\
	{F(Y^\bullet)} \& {F_0(Y^\bullet_{\leq n+p})}
	\arrow["{F(p^X_n)}", from=1-1, to=1-2]
	\arrow["{F(f)}"', from=1-1, to=2-1]
	\arrow["{F(p^Y_{n+p})}"', from=2-1, to=2-2]
	\arrow["{F_0(f_n)}", from=1-2, to=2-2]
\end{tikzcd}
\]
for all $n \geq 0$. This, in turn, follows from the application of $F$ to the diagram:
\[
\begin{tikzcd}[ampersand replacement=\&]
	{X^\bullet} \& {X^\bullet_{\leq n}} \\
	{Y^\bullet} \& {Y^\bullet_{\leq n+p},}
	\arrow["{p^X_n}", from=1-1, to=1-2]
	\arrow["f"', from=1-1, to=2-1]
	\arrow["{p^Y_{n+p}}"', from=2-1, to=2-2]
	\arrow["{f_n}", from=1-2, to=2-2]
\end{tikzcd} 
\]
whose commutativity can be proved directly, also recalling the definition of $f_n$ (cf. \eqref{equation:truncation_degreep_components}).
\end{proof}
\begin{proposition} \label{proposition:bimod_preserve_limit_qis}
Assume that $\varcat P$ is closed under countable products, so that $\compdg(\varcat P)$ has (strict) countable direct products. Let $\cat B$ be a dg-category and let
\[
F \colon \compdg(\varcat P) \to \dgm(\cat B)
\]
be a dg-functor. Assume that $F$ preserves countable products up to quasi-isomorphism, namely: for any family of objects $\{X_i^\bullet : i \in \mathbb N\}$ the natural morphism
\[
F(\prod_i X^\bullet_i) \to \prod_i F(X^\bullet_i)
\]
is a quasi-isomorphism in $\dgm(\cat B)$. This holds, for instance, if $\cat B$ is pretriangulated and $F$ is a quasi-functor such that $H^0(F)$ preserves countable products.

Then, the above Lemma \ref{lemma:dgfunct_iso_limit_extension} can be applied and we obtain a quasi-isomorphism of $\compdg(\varcat P)$-$\cat B$-dg-bimodules:
\[
    F \xrightarrow{\sim} \varprojlim_{n \geq 0} F_0(-_{\leq n}).
\]
\end{proposition}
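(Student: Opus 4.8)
The plan is to verify the hypothesis of Lemma \ref{lemma:dgfunct_iso_limit_extension}: that for every $X^\bullet \in \compdg(\varcat P)$ the canonical morphism $F(X^\bullet) \to \varprojlim_{n \geq 0} F(X^\bullet_{\leq n})$, induced by the $F(p_n)$ of \eqref{equation:brutaltrunc_proj_inj}, is a quasi-isomorphism. The starting point is the presentation \eqref{equation:complex_colimit_brutal} of $X^\bullet$ as the strict limit of its brutal truncations. Concretely, since $\varcat P$ is closed under countable products, the tower $(X^\bullet_{\leq n}, p_{n+1,n})$ fits into a short exact sequence
\[
0 \to X^\bullet \xrightarrow{\iota} \prod_{n \geq 0} X^\bullet_{\leq n} \xrightarrow{1 - s} \prod_{n \geq 0} X^\bullet_{\leq n} \to 0,
\]
where $s$ is the shift built from the $p_{n+1,n}$ (cf. \eqref{equation:projections_inclusions_systems}) and $\iota = (p_n)_n$. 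This sequence is termwise split (in each internal degree the transition maps are identities or zero, so $1-s$ is degreewise surjective there), hence it presents $X^\bullet$ as the homotopy fibre of $1 - s$ and yields a strict distinguished triangle in $\compdg(\varcat P)$.

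Before transporting this along $F$, I would record the structural fact that makes the strict limit downstairs compute the correct object. The kernel of $p_{n+1,n} \colon X^\bullet_{\leq n+1} \to X^\bullet_{\leq n}$ is the one-term complex $X^{n+1}[-n-1]$, giving a termwise-split short exact sequence $0 \to X^{n+1}[-n-1] \to X^\bullet_{\leq n+1} \to X^\bullet_{\leq n} \to 0$. Since $F$ is a dg-functor it preserves finite biproducts and twisted differentials, hence preserves cones of closed degree-one morphisms and therefore preserves termwise-split short exact sequences; applying $F$ shows that each transition map $F(p_{n+1,n}) \colon F(X^\bullet_{\leq n+1}) \to F(X^\bullet_{\leq n})$ is again a termwise-split epimorphism, in particular degreewise surjective. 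Consequently the tower $(F(X^\bullet_{\leq n}))_n$ is degreewise surjective, so $1 - s'$ on $\prod_n F(X^\bullet_{\leq n})$ is degreewise surjective and its strict kernel $\varprojlim_n F(X^\bullet_{\leq n})$ coincides with the homotopy fibre: the strict limit computes the homotopy limit, with no $\varprojlim^1$ contribution.

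To conclude, I would apply $F$ to the termwise-split sequence for $X^\bullet$. As $F$ preserves cones, $F(X^\bullet) \to F(\prod_n X^\bullet_{\leq n}) \xrightarrow{F(1-s)} F(\prod_n X^\bullet_{\leq n})$ is again a distinguished triangle; the hypothesis that $F$ preserves the countable product up to quasi-isomorphism gives a quasi-isomorphism $F(\prod_n X^\bullet_{\leq n}) \xrightarrow{\sim} \prod_n F(X^\bullet_{\leq n})$ which, by naturality of the canonical comparison, intertwines $F(1-s)$ with $1 - s'$. Comparing this triangle with the strict-limit triangle $\varprojlim_n F(X^\bullet_{\leq n}) \to \prod_n F(X^\bullet_{\leq n}) \xrightarrow{1 - s'} \prod_n F(X^\bullet_{\leq n})$ from the previous step, the two right-hand vertical maps are quasi-isomorphisms, so the induced comparison on fibres $F(X^\bullet) \to \varprojlim_n F(X^\bullet_{\leq n})$ is a quasi-isomorphism. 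Finally, since $\mathrm{pr}_n \circ \iota = p_n$, this induced map has $n$-th component $F(p_n)$, so it is exactly the natural morphism appearing in Lemma \ref{lemma:dgfunct_iso_limit_extension}; that lemma then delivers the desired quasi-isomorphism $F \xrightarrow{\sim} \varprojlim_{n \geq 0} F_0(-_{\leq n})$.

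I expect the main obstacle to be precisely the passage from ``$F$ preserves products'' to ``$F$ commutes with the strict sequential limit defining $X^\bullet$'', i.e. the strict-versus-homotopy-limit comparison, since brutal truncations are not functorial and a generic additive functor does not preserve strict limits. The device that resolves it is the observation above that $F$ preserves termwise-split short exact sequences, which forces the transition maps $F(p_{n+1,n})$ to be degreewise surjective and hence kills the potential $\varprojlim^1$ discrepancy.
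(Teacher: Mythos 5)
Your proof is correct and is essentially the paper's own argument: both hinge on the fact that the brutal-truncation towers have graded-split transition maps which any dg-functor preserves, so that the strict inverse limits compute homotopy limits on both sides, and preservation of countable products up to quasi-isomorphism then transfers the homotopy limit along $F$. The only difference is packaging: the paper obtains the splitting of $F(p_{n+1,n})$ immediately by applying $F$ to the identity $p_{n+1,n}\circ i_{n,n+1}=1$ (cf. \eqref{equation:projections_inclusions_systems}) rather than via your kernel/cone detour, and it outsources your explicit Milnor $1-s$ sequence, Mittag-Leffler $\varprojlim^1$-vanishing, and five-lemma steps to the citation \cite[Proposition 1.11]{genovese-twisted-unbounded} together with the observation that product-preservation up to quasi-isomorphism implies preservation of sequential homotopy limits.
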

\begin{proof}
We observe that for all $X^\bullet$ and $n \in \mathbb N$ the morphism
\[
F(p^X_{n+1,n}) \colon F(X^\bullet_{\leq n+1}) \to F(X^\bullet_{\leq n})
\]
is a split epimorphism, with right inverse given by $F(i^X_{n,n+1})$ (cf. \eqref{equation:projections_inclusions_systems}). Hence, recalling \cite[Proposition 1.11]{genovese-twisted-unbounded} we conclude that
\[
\varprojlim_{n \geq 0} F(X_{\leq n}),
\]
together with the suitable projection morphisms, is a \emph{homotopy limit} of the sequence $(F(p^X_{n+1,n}))_n$.

We conclude that the natural map
\[
F(X^\bullet) \to \varprojlim_{n \geq 0} F(X^\bullet_{\leq n})
\]
is a quasi-isomorphism. Indeed, if $F$ preserves countable products up to quasi-isomorphism, it preserves sequential homotopy limits up to quasi-isomorphism. The hypotheses of Lemma \ref{lemma:dgfunct_iso_limit_extension} are satisfied, and we may conclude.
\end{proof}

We now prove an extension result of quasi-isomorphisms which immediately implies the dual of Proposition \ref{proposition:extendiso}.
\begin{lemma} \label{lemma:extending_qis_unbounded}
As in Proposition \ref{proposition:bimod_preserve_limit_qis}, assume that $\varcat P$ is closed under countable products, so that $\compdg(\varcat P)$ has (strict) countable products. Let $\cat B$ be a dg-category and let
\[
F, G \colon \compdg(\varcat P) \to \dgm(\cat B)
\]
be dg-functors. Assume that $F$ and $G$ preserve countable products up to quasi-isomorphism, so that by Proposition \ref{proposition:bimod_preserve_limit_qis} we have quasi-isomorphisms:
\begin{align*}
     F & \xrightarrow{\sim} \varprojlim_{n \geq 0} F_0(-_{\leq n}), \\
     G & \xrightarrow{\sim} \varprojlim_{n \geq 0} G_0(-_{\leq n}),
\end{align*}
where $F_0$ and $G_0$ denote the restrictions $F \circ j$ and $G \circ j$ of $F$ and $G$ to $\compdgminus(\varcat P)$.

Next, let
\[
\varphi_0 \colon F_0 \to G_0
\]
be an isomorphism in the derived category $\dercomp(\cat B \otimes \opp{\compdgminus(\varcat P)})$ of $\compdgminus(\varcat P)$-$\cat B$-dg-bimodules. Then, $\varphi_0$ can be extended to an isomorphism
\[
\varphi \colon F \to G
\]
in the derived category $\dercomp(\cat B \otimes \opp{\compdg(\varcat P)})$ of $\compdg(\varcat P)$-$\cat B$-dg-bimodules.
\end{lemma}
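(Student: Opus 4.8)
The plan is to reduce the statement to a single zig-zag of honest quasi-isomorphisms and then push that zig-zag through the limit-extension of Proposition \ref{proposition:dgfunct_extend_limit}. Since $\varphi_0$ is an isomorphism in $\dercomp(\cat B \otimes \opp{\compdgminus(\varcat P)})$, I would represent it by a zig-zag of closed degree $0$ quasi-isomorphisms of $\compdgminus(\varcat P)$-$\cat B$-dg-bimodules
\[
F_0 \xleftarrow{\;\alpha_0\;} H_0 \xrightarrow{\;\beta_0\;} G_0,
\]
where $H_0 \colon \compdgminus(\varcat P) \to \dgm(\cat B)$ is an intermediate dg-functor. Setting $H = \varprojlim_{n \geq 0} H_0(-_{\leq n})$ via Proposition \ref{proposition:dgfunct_extend_limit}, the goal is to produce a zig-zag between $\varprojlim_{n \geq 0} F_0(-_{\leq n})$ and $\varprojlim_{n \geq 0} G_0(-_{\leq n})$ and then glue it to the quasi-isomorphisms $F \xrightarrow{\sim} \varprojlim_{n \geq 0} F_0(-_{\leq n})$ and $G \xrightarrow{\sim} \varprojlim_{n \geq 0} G_0(-_{\leq n})$ furnished by Proposition \ref{proposition:bimod_preserve_limit_qis}.

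First I would make the extension functorial on strict natural transformations. Given the closed degree $0$ morphism $\alpha_0 \colon H_0 \to F_0$, its components $(\alpha_0)_{X^\bullet_{\leq n}}$ are compatible with the projection towers $(p^X_{n+1,n})_n$ (which are closed, so $\alpha_0$ is natural with respect to them), and the universal property of the sequential limits defining $H$ and $\varprojlim_{n \geq 0} F_0(-_{\leq n})$ then yields a unique morphism $\alpha = \varprojlim_n (\alpha_0)_{(-)_{\leq n}}$. I would verify that $\alpha$ is natural in $X^\bullet$, i.e. a genuine morphism of $\compdg(\varcat P)$-$\cat B$-dg-bimodules, by composing with the defining projections $\operatorname{pr}^{F,Y}_{n+p}$ and invoking naturality of $\alpha_0$ against the truncated morphisms $f_n$, exactly as in the diagram chase of Lemma \ref{lemma:dgfunct_iso_limit_extension}. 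The same construction applied to $\beta_0$ produces $\beta \colon H \to \varprojlim_{n \geq 0} G_0(-_{\leq n})$.

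The crucial point, and the main obstacle, is that $\alpha$ and $\beta$ are again quasi-isomorphisms: strict sequential limits do not preserve quasi-isomorphisms in general. Here I would reuse the observation from the proof of Proposition \ref{proposition:bimod_preserve_limit_qis}, valid for \emph{any} dg-functor $H_0 \colon \compdgminus(\varcat P) \to \dgm(\cat B)$, that the transition map $H_0(p^X_{n+1,n})$ is a degreewise split epimorphism with right inverse $H_0(i^X_{n,n+1})$, since $p^X_{n+1,n} \circ i^X_{n,n+1} = 1$ and $H_0$ respects composition of all morphisms (not only closed ones). By \cite[Proposition 1.11]{genovese-twisted-unbounded} the strict limit $\varprojlim_n H_0(X^\bullet_{\leq n})$ is then a homotopy limit of the tower, and likewise for $F_0$ and $G_0$. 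Since each component $(\alpha_0)_{X^\bullet_{\leq n}}$ is a quasi-isomorphism (it is $\alpha_0$ evaluated at $X^\bullet_{\leq n} \in \compdgminus(\varcat P)$), the induced map of sequential homotopy limits is a quasi-isomorphism, so $\alpha_{X^\bullet}$ is a quasi-isomorphism for every $X^\bullet$; the same holds for $\beta$.

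Finally I would assemble the chain
\[
F \xrightarrow{\sim} \varprojlim_{n \geq 0} F_0(-_{\leq n}) \xleftarrow{\;\alpha\;} H \xrightarrow{\;\beta\;} \varprojlim_{n \geq 0} G_0(-_{\leq n}) \xleftarrow{\sim} G,
\]
all of whose arrows are now quasi-isomorphisms of $\compdg(\varcat P)$-$\cat B$-dg-bimodules, and take $\varphi \colon F \to G$ to be the resulting isomorphism in $\dercomp(\cat B \otimes \opp{\compdg(\varcat P)})$. That $\varphi$ restricts to $\varphi_0$ follows by restricting the entire chain along $j$: by the last part of Proposition \ref{proposition:dgfunct_extend_limit} the truncation towers are eventually constant on $\compdgminus(\varcat P)$, so the extensions restrict naturally to $F_0$, $H_0$, $G_0$ and the outer quasi-isomorphisms restrict to identities, while $\alpha$ and $\beta$ restrict to $\alpha_0$ and $\beta_0$, recovering the original zig-zag representing $\varphi_0$.
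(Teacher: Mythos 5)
Your proposal is correct and takes essentially the same route as the paper's own proof: represent $\varphi_0$ by a zig-zag of quasi-isomorphisms through an intermediate bimodule $H_0$, extend all three bimodules via the limit construction of Proposition \ref{proposition:dgfunct_extend_limit}, extend the two legs by the universal property of the sequential limits, and use the split-epimorphism/homotopy-limit observation (via \cite[Proposition 1.11]{genovese-twisted-unbounded}) to conclude the extended legs are still quasi-isomorphisms, with naturality checked on the truncated squares. The only (harmless) difference is that you keep the comparison quasi-isomorphisms of Proposition \ref{proposition:bimod_preserve_limit_qis} explicit in a five-term chain and spell out why the result restricts to $\varphi_0$, whereas the paper simply identifies $F$ and $G$ with their limit extensions at the outset.
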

\begin{proof}
    Up to isomorphism, we may \emph{identify} $F$ and $G$ as follows:
    \[
    F = \varprojlim_{n \geq 0} F_0(-_{\leq n}), \quad G= \varprojlim_{n \geq 0} G_0(-_{\leq n}).
    \]
    The isomorphism $\varphi_0 \colon F_0 \to G_0$ is represented by a zig-zag of quasi-isomorphisms
    \[
    F_0 \xleftarrow{\varphi'_0} H_0 \xrightarrow{\varphi''_0} G_0,
    \]
    for a suitable $H_0 \colon \compdgminus(\varcat P) \to \dgm(\cat B)$. Applying Proposition \ref{proposition:dgfunct_extend_limit} and setting
    \[
    H = \varprojlim_{n \geq 0} H_0(-_{\leq n}),
    \]
    our goal is to extend both $\varphi'_0$ and $\varphi''_0$ to quasi-isomorphisms
    \[
    F \xleftarrow{\varphi'} H \xrightarrow{\varphi''} G,
    \]
    hence obtaining the desired extension $\varphi \colon F \to G$ of $\varphi_0$.

    We describe the extension $\varphi'$; a similar argument will yield $\varphi''$. Let $X^\bullet \in \compdg(\varcat P)$. We define
    \[
    \varphi'(X^\bullet) = \varprojlim_{n \geq 0} \varphi'_0(X^\bullet_{\leq n}) \colon H(X^\bullet) \to F(X^\bullet).
    \]
    More precisely, $\varphi'(X^\bullet)$ is the unique closed degree $0$ morphism which makes the following diagram commute for any $n \geq 0$:
    \[
    \begin{tikzcd}[ampersand replacement=\&]
	{\varprojlim_{n \geq 0} H_0(X^\bullet_{\leq n})} \& {\varprojlim_{n \geq 0} F_0(X^\bullet_{\leq n})} \\
	{H_0(X^\bullet_{\leq n})} \& {F_0(X^\bullet_{\leq n}).}
	\arrow["{\varphi'(X^\bullet)}", from=1-1, to=1-2]
	\arrow["{\varphi'_0(X^\bullet_{\leq n})}"', from=2-1, to=2-2]
	\arrow["{\operatorname{pr}^{F,X}_n}", from=1-2, to=2-2]
	\arrow["{\operatorname{pr}^{H,X}_n}"', from=1-1, to=2-1]
    \end{tikzcd}
    \]
    The morphism $\varphi'_0(X^\bullet_{\leq n})$ is a quasi-isomorphism for all $n \geq 0$ by hypothesis. Recalling the first lines of the proof of Proposition \ref{proposition:bimod_preserve_limit_qis}, we notice that both
    \[
    \varprojlim_{n \geq 0} H_0(X^\bullet_{\leq n}), \quad \varprojlim_{n \geq 0} F_0(X^\bullet_{\leq n}),
    \]
    together with the given projection morphisms, are actually \emph{homotopy limits} of the given sequences. Hence, we see that $\varphi'(X^\bullet)$ is also a quasi-isomorphism.

    To conclude, we only need to check that $\varphi'$ is a dg-natural transformation. Namely, if $f \colon X^\bullet \to Y^\bullet$ is a degree $p$ morphism in $\compdg(\varcat P)$, we want to prove that the following diagram is commutative:
    \begin{equation} \label{equation:varphi_extension_natural} \tag{$\ast$}
       \begin{tikzcd}[ampersand replacement=\&]
	{\varprojlim_{n \geq 0} H_0(X^\bullet_{\leq n})} \& {\varprojlim_{n \geq 0} F_0(X^\bullet_{\leq n})} \\
	{\varprojlim_{n \geq 0} H_0(Y^\bullet_{\leq n})} \& {\varprojlim_{n \geq 0} F_0(Y^\bullet_{\leq n}).}
	\arrow["{\varphi'(X^\bullet)}", from=1-1, to=1-2]
	\arrow["{H(f)}"', from=1-1, to=2-1]
	\arrow["{\varphi'(Y^\bullet)}"', from=2-1, to=2-2]
	\arrow["{F(f)}", from=1-2, to=2-2]
\end{tikzcd}
    \end{equation}
    To see this, recall the definitions of $F(f)$ and $H(f)$, see Proposition \ref{proposition:dgfunct_extend_limit} and in particular \eqref{equation:extension_dgfunct_definition}; commutativity of \eqref{equation:varphi_extension_natural} follows directly from the commutativity of the following diagram:
    \[
    \begin{tikzcd}[ampersand replacement=\&]
    	{H_0(X^\bullet_{\leq n})} \& {F_0(X^\bullet_{\leq n})} \\
    	{H_0(Y^\bullet_{\leq n+p})} \& {F_0(Y^\bullet_{\leq n+p}),}
    	\arrow["{\varphi'_0(X^\bullet_{\leq n})}", from=1-1, to=1-2]
    	\arrow["{H_0(f_n)}"', from=1-1, to=2-1]
    	\arrow["{\varphi'_0(Y^\bullet_{\leq n+p})}"', from=2-1, to=2-2]
    	\arrow["{F_0(f_n)}", from=1-2, to=2-2]
    \end{tikzcd}
    \]
    for all $n \geq 0$. 
\end{proof}

\subsection{The proof of Proposition \ref{proposition:extendiso}}
The proof of Proposition \ref{proposition:extendiso} will follow essentially from Lemma \ref{lemma:extending_qis_unbounded} by duality of quasi-functors (cf. \S \ref{subsubsection:opposite_qfun}).

First, we observe that
\[
\opp{\compdgplus(\varcat I)} = \compdgminus(\opp{\varcat I}), \qquad \opp{\compdg(\varcat I)} = \compdg(\opp{\varcat I}).
\]
We also recall the inclusion dg-functor $i' \colon \compdgplus(\varcat I) \hookrightarrow \compdg(\varcat I)$.

Now, starting from our isomorphism
\[
\varphi_0 \colon F_0 \to G_0
\]
of quasi-functors
\[
F_0 = F \circ i', G_0 = G \circ i' \colon \compdgplus(\varcat I) \to \cat B,
\]
we obtain an isomorphism of quasi-functors
\[
\opp{\varphi}_0 \colon \opp{G}_0 \to \opp{F}_0 \colon \compdgminus(\opp{\varcat I}) \to \opp{\cat B}.
\]

Clearly, $H^0(\opp{F})$ and $H^0(\opp{G})$ preserve countable products. We view $\opp{F}$ and $\opp{G}$ as dg-functors
\[
\opp{F}, \opp{G} \colon \compdg(\opp{\varcat I}) \to \dgm(\opp{\cat B}).
\]
Let $\{X^\bullet_n : n \in \mathbb N\}$ be any countable family of objects in $\compdg(\opp{\varcat I})$. The following diagram in the derived category $\dercomp(\opp{\cat B})$ involving $F$ (and a similar diagram involving $G$) is commutative:
\[
\begin{tikzcd}[ampersand replacement=\&]
	{\opp{\cat B}(-,\Phi_F(\prod_n X^\bullet_n))} \& {\prod_n\opp{\cat B}(-,\Phi_F(X^\bullet_n))} \\
	{F(\prod_n X^\bullet_n)} \& {\prod_n F(X^\bullet_n),}
	\arrow[from=1-1, to=1-2]
	\arrow["\sim"', from=1-1, to=2-1]
	\arrow[from=2-1, to=2-2]
	\arrow["\sim", from=1-2, to=2-2]
\end{tikzcd}
\]
where $\Phi_F(Y^\bullet)$ denotes the object of $\opp{\cat B}$ quasi-representing $F(Y^\bullet)$. By assumption, the upper horizontal arrow is an isomorphism in $\dercomp(\opp{\cat B})$. We conclude that the natural lower horizontal arrow is a quasi-isomorphism.

Hence, we may apply Lemma \ref{lemma:extending_qis_unbounded} and find an isomorphism of quasi-functors
\[
\opp{\varphi} \colon \opp{G} \to \opp{F}.
\]
extending $\opp{\varphi}_0$. Taking its opposite, we obtain the desired isomorphism of quasi-functors
\[
\varphi \colon F \to G
\]
extending $\varphi_0$. This concludes the proof. \qed

\bibliographystyle{amsplain}

\providecommand{\bysame}{\leavevmode\hbox to3em{\hrulefill}\thinspace}
\providecommand{\MR}{\relax\ifhmode\unskip\space\fi MR }
% \MRhref is called by the amsart/book/proc definition of \MR.
\providecommand{\MRhref}[2]{%
  \href{http://www.ams.org/mathscinet-getitem?mr=#1}{#2}
}
\providecommand{\href}[2]{#2}

\end{document}